\documentclass[12pt]{amsart}
\usepackage{amsmath,amsthm,amssymb,amscd,url,enumerate,mathtools}
\usepackage{graphicx}
\usepackage[margin=1in]{geometry}
\usepackage{afterpage}
\usepackage{tikz}
\usepackage[loadshadowlibrary]{todonotes}
\usepackage[all]{xy}
\usepackage[colorlinks=true,citecolor={blue},linkcolor = {purple},
pdfauthor={Dylan Scofield and Hanson Smith}, pdftitle={Prime Splitting and Common N-Index Divisors in Radical Extensions}, pdfsubject={Algebraic Number Theory},
            pdfkeywords={Radical extension, Pure extension, Prime splitting, Prime ideal factorization, Monogenic}]{hyperref} 
\usepackage{tabularx}
\usepackage{subcaption} 
\usepackage{float}		
\usepackage{booktabs}
\usepackage{subcaption}
\usepackage{comment}
\usepackage{caption}

\usepackage{enumitem} 
\usepackage{refcount} 
\usepackage{etoolbox} 
\usepackage{multirow}

\usepackage{multicol}
\setlength{\columnsep}{2cm}

\usepackage{relsize} 

\usepackage{dutchcal}

\usepackage[square]{natbib}
\setcitestyle{numbers}



\newcommand{\TITLE}{Prime Splitting and Common $N$-Index Divisors in Radical Extensions:\\  Part $p=2$}
\newcommand{\TITLERUNNING}{Splitting in Radical Extensions}


\theoremstyle{plain}
\newtheorem{theorem}{Theorem}

\newtheorem{proposition}[theorem]{Proposition}
\newtheorem{lemma}[theorem]{Lemma}
\newtheorem{corollary}[theorem]{Corollary}

\theoremstyle{definition}

\theoremstyle{remark}
\newtheorem{remark}[theorem]{Remark}

\newtheorem{example}[theorem]{Example}

\numberwithin{theorem}{section}


%
  {\end{list}}

%
  {\end{list}}

%


\newcommand{\tightoverset}[2]{%
  \mathop{#2}\limits^{\vbox to -.5ex{\kern-1.05ex\hbox{$#1$}\vss}}}

\numberwithin{equation}{section} 





\newcommand{\gm}{{\mathfrak{m}}}
\newcommand{\gn}{{\mathfrak{n}}}
\newcommand{\gp}{{\mathfrak{p}}}

\newcommand{\gP}{{\mathfrak{P}}}


\def\Bcal{{\mathcal B}}

\def\Ocal{{\mathcal O}}




\newcommand{\FF}{\mathbb{F}}

\newcommand{\QQ}{\mathbb{Q}}

\newcommand{\ZZ}{\mathbb{Z}}




\newcommand{\dnd}{\nmid}

\newcommand{\ol}[1]{\overline{#1}}

\newcommand{\Disc}{\operatorname{Disc}}

\newcommand{\Irr}{\operatorname{Irred}}
\newcommand{\red}{\operatorname{red}}



\title[\TITLERUNNING]{\TITLE}



\author[Dylan Scofield]{Dylan Scofield}
\address{Department of Mathematics, California State University San Marcos,
333 S. Twin Oaks Valley Rd.
San Marcos, CA 92096
USA}
\email{scofi004@csusm.edu}

\author[Hanson Smith]{Hanson Smith}
\address{Department of Mathematics, California State University San Marcos,
333 S. Twin Oaks Valley Rd.
San Marcos, CA 92096
USA}
\email{hsmith@csusm.edu}

\keywords{Radical extension, Pure extension, Prime splitting, Prime ideal factorization} 
\subjclass[2020]{11R04, 11R21, 11R27}


\begin{document}

\sloppy 


\baselineskip=17pt


\begin{abstract}
Following work of V\'elez, we explicitly describe the splitting of the integral prime 2 in the radical extension $\mathbb{Q}(\sqrt[n]{a})$, where $x^n-a$ is an irreducible polynomial in $\mathbb{Z}[x]$. With previous work of the second author, this fully describes the splitting of any prime in $\mathbb{Q}(\sqrt[n]{a})$. Using this description, we classify common index divisors (the primes whose splitting prevents the existence of a power integral basis for the ring of integers). Using work of Pleasants, we extend this to describe common $N$-index divisors (primes that divide the index of any order generated over $\mathbb{Z}$ by $N$ elements). We also present a novel construction of non-monogenic fields with no common index divisors as well as constructions of number rings requiring $N+1$ ring generators for any $N>0$. Examples are provided throughout.
\end{abstract}


\maketitle




\section{Introduction and Main Theorem}

Let $x^n-a$ be an irreducible polynomial in $\ZZ[x]$. The primary objective of this paper is to establish Theorem \ref{Thm: Main}, an explicit description of the splitting of the integral prime 2 in $\Ocal_{\QQ(\hspace{-.5ex}\sqrt[n]{a})}$ the ring of integers of $\QQ\big(\hspace{-.5ex}\sqrt[n]{a}\big)$. Combining this with Theorem \ref{Thm: MainOddp}, the result for odd primes proved in \cite{SmithRadicalSplitting}, we achieve a complete, explicit description of prime splitting in radical\footnote{These are also known as \textit{pure extensions} and \textit{root extensions}.} extensions of $\QQ$. Numerous examples of these theorems are computed in Section \ref{Sec: Examples}, and a SageMath implementation can be found here: \href{https://doi.org/10.5281/zenodo.20208664}{https://doi.org/10.5281/zenodo.20208664}.

In Corollaries \ref{Cor: 2NCID} and \ref{Cor: oddNCIDs}, this description is employed to classify the common $N$-index divisors (primes dividing the index of any order $\ZZ\big[\theta_1,\dots, \theta_N\big]$ in the maximal order $\Ocal_K$) of a radical extension. Using this we can construct radical extensions with common $N$-index divisors for arbitrary $N$; see Example \ref{Ex: Using2togetNgenerators}. In Proposition \ref{Prop: NonRadCommonN}, we construct non-radical extensions of minimal degree having a common $N$-index divisor. In the other direction, in Section \ref{Sec: NonMonoNoCIDs} we use the index form to construct extensions that are not monogenic yet have no common index divisors. 

Before we state our theorems, we make the following definition. Let $L/K$ be an extension of number fields with respective rings of integers $\Ocal_L$ and $\Ocal_K$. Let $\gp\subset \Ocal_K$ be a prime ideal with residue field $k_\gp\coloneqq \Ocal_K/\gp$. An ideal $\mathfrak{I}\subset \Ocal_L$ \textit{mirrors} the factorization of a polynomial $f(x)\in k_\gp[x]$ if, given a factorization into irreducibles $f(x)=\prod_{i=1}^r \phi_i(x)^{e_i}$ in $k_\gp[x]$, there is a prime ideal factorization $\mathfrak{I}=\prod_{i=1}^r\gP_i^{e_i}$ in $\Ocal_L$ where the degree of $\gP_i$ over $\gP_i\cap \Ocal_K$, i.e. $[\Ocal_L/\gP_i:k_\gp]$, is equal to the degree of $\phi_i(x)$. 

We can now state our main theorem.

\begin{theorem}[The prime ideal factorization of $2\Ocal_{\QQ(\hspace{-.5ex}\sqrt[n]{a})}$]\label{Thm: Main}
    Suppose $x^n-a\in \ZZ[x]$ is irreducible. Changing variables, we assume that $a$ is $n^{\text{th}}$ power free. Let $2\Ocal_{\QQ(\hspace{-.5ex}\sqrt[n]{a})}$ denote the ideal generated by $2$ in the ring of integers of $\QQ\big(\hspace{-.5ex}\sqrt[n]{a}\big)$. The ideal factors of $2\Ocal_{\QQ(\hspace{-.5ex}\sqrt[n]{a})}$ mirror the factorization of a monic, separable polynomial $g(x)~\in~\ZZ[x]$ over either $\FF_2[x]$ or $\FF_4[x]$:
    \begin{equation}
        \ol{g(x)}=\prod_{i=1}^r \gamma_i(x) \ \text{ over } \ \FF_2[x] \ \text{ and } \ \ol{g(x)}=\prod_{j=1}^s \Gamma_j(x) \ \text{ over } \ \FF_4[x]. 
    \end{equation}
    Using indices to indicate the relevant factorization, $1\leq i\leq r$ corresponds to a factorization over $\FF_2[x]$ with $\deg\gamma_i(x)=[\Ocal_{\QQ(\hspace{-.5ex}\sqrt[n]{a})}/\gp_i:\FF_2]$, while $1\leq j\leq s$ corresponds to a factorization over $\FF_4[x]$ with $\deg\Gamma_j(x)=[\Ocal_{\QQ(\hspace{-.5ex}\sqrt[n]{a})}/\gp_j:\FF_4]$. In particular, we have the following prime ideal factorizations:

    \begin{enumerate}[label=\Roman*., ref=\Roman*]
        \item If $na$ is odd, then $g(x)=x^n-a$ and $\displaystyle 2 \Ocal_{\QQ(\hspace{-.5ex}\sqrt[n]{a})} =\prod_{i=1}^r\gp_i$, with $\gp_i=\big(2,\gamma_i(\hspace{-.5ex}\sqrt[n]{a})\big)$.\label{MainI}

        \item  Otherwise, if $2\mid a$ and $2\nmid\gcd\big(v_2(a),n\big)$, then $g(x)=x^{\gcd(v_2(a),n)}-1$ 
      and \[\displaystyle 2 \Ocal_{\QQ(\hspace{-.5ex}\sqrt[n]{a})} =\prod_{i=1}^r\gp_i^{n / \gcd(v_2(a),n)}.\]\label{MainII}

        \item Otherwise, if $2\mid n$ and $2\nmid a$, then we write $n=2^m n_0$ with $n_0$ odd and let $w=v_2(a-1)$. Here $g(x)=x^{n_0}-1$. \label{MainIII}
        \begin{enumerate}[label=\roman*., ref=III.\roman*]
            \item If $w=1$, then $\displaystyle 2 \Ocal_{\QQ(\hspace{-.5ex}\sqrt[n]{a})} =\prod_{i=1}^r\gp_i^{2^m}$.\label{MainIIIi}

            \item If $2\leq w\leq m+1$, then $\displaystyle 2 \Ocal_{\QQ(\hspace{-.5ex}\sqrt[n]{a})} =\prod_{j=1}^s\gp_j^{2^{m-w+1}} \prod_{l=2}^{w-1} \prod_{i=1}^r\gp_{l,i}^{2^{m-w+l}} .$\label{MainIIIii}
            
            \vspace{.05 in}
            
            \item If $w>m+1$, then $\displaystyle 2 \Ocal_{\QQ(\hspace{-.5ex}\sqrt[n]{a})} =\prod_{l=1}^{m+1} \prod_{i=1}^r\gp_{l,i}^{2^{\max(l-2,0)}} .$ \label{MainIIIiii}
        \end{enumerate}

\vspace{.05 in}

        \item Otherwise, $2\mid a$ and $2\mid\gcd\big(v_2(a),n\big)$. We write $a=a_02^{h2^k}$ with $a_0$ and $h$ odd and $k\geq 1$. 
        We let $w_0=v_2(a_0-1)$ and $t=\max(0,w_0-2)$. With the superscript indicating perfect powers, $a_0\in \ZZ_2^{2^{t}}$ but $a_0\notin \ZZ_2^{2^{t+1}}$. Recalling $n=2^m n_0$ with $n_0$ odd, we let $d=\gcd(h,n_0)$. 
        Here we have $g(x)=x^d-1$. \label{MainIV}
        \begin{enumerate}[label= Case \O., ref=IV.\O]
            \item If $k\geq m$, then $\QQ\big(\hspace{-.5ex}\sqrt[2^m]{a}\big)=\QQ\big(\hspace{-.5ex}\sqrt[2^m]{a_0}\big)$, so it is similar to Case \ref{MainIII} with $w=w_0$. \label{MainIV0}
                \begin{enumerate}[label=\arabic*., ref=IV.\O.\arabic*]
                    \item If $w_0=1$, then $\displaystyle 2 \Ocal_{\QQ(\hspace{-.5ex}\sqrt[n]{a})} =\prod_{i=1}^r\gp_i^{\frac{n_0}{d} \cdot 2^m}$.\label{MainIV0i}

                    \item If $2\leq w_0\leq m+1$, then $\displaystyle 2 \Ocal_{\QQ(\hspace{-.5ex}\sqrt[n]{a})} =\prod_{j=1}^s\gp_j^{\frac{n_0}{d} \cdot 2^{m-w_0+1}} \prod_{l=2}^{w_0-1} \prod_{i=1}^r\gp_{l,i}^{\frac{n_0}{d} \cdot 2^{m-w_0+l}} .$\label{MainIV0ii}
            
                    \vspace{.05 in}
            
                    \item If $w_0>m+1$, then $\displaystyle 2 \Ocal_{\QQ(\hspace{-.5ex}\sqrt[n]{a})} =\prod_{l=1}^{m+1} \prod_{i=1}^r\gp_{l,i}^{\frac{n_0}{d} \cdot 2^{\max(l-2,0)}} .$ \label{MainIV0iii}
            \end{enumerate}
        \end{enumerate}
        For the remaining subcases, $k<m$.
        \begin{enumerate}[label=\roman*., ref=IV.\roman*]        
            \item If $w_0=1$, then there are a number of cases. Note $v_2(a_0+1)>1$, since $w_0=1$. \label{MainIVi}
                \begin{enumerate}[label=\arabic*., ref=IV.i.\arabic*]
                    \item If $v_2(a_0+1)=2$ or if $v_2(a_0+1)\geq 3$ and $k\geq 2$, then $\displaystyle 2 \Ocal_{\QQ(\hspace{-.5ex}\sqrt[n]{a})} =\prod_{i=1}^r \gp_i^{\frac{n_0}{d}\cdot 2^{m}}$. \label{MainIVi1}

\vspace{.05 in}
                    
                    \item If $v_2(a_0+1)=3$ and $k=1$, then $\displaystyle 2 \Ocal_{\QQ(\hspace{-.5ex}\sqrt[n]{a})} =\prod_{j=1}^s \gp_j^{\frac{n_0}{d}\cdot2^{m-1}}$. \label{MainIVi2} 

\vspace{.05 in}
                    
                    \item If $v_2(a_0+1)\geq 4$ and $k=1$, then $\displaystyle 2 \Ocal_{\QQ(\hspace{-.5ex}\sqrt[n]{a})} =\prod_{i=1}^r \gp_{1,i}^{\frac{n_0}{d}\cdot 2^{m-1}}\gp_{2,i}^{\frac{n_0}{d}\cdot 2^{m-1}}$. \label{MainIVi3}
                \end{enumerate}

            \item If $w_0=2$, then $\displaystyle 2 \Ocal_{\QQ(\hspace{-.5ex}\sqrt[n]{a})} = \prod_{j=1}^s \gp_j^{\frac{n_0}{d}\cdot2^{m-1}}.$ \label{MainIVii}

            \item If $w_0\geq 3$, then $t=w_0-2$ and we again have a number of cases: \label{MainIViii}
                \begin{enumerate}[label=\arabic*., ref=IV.iii.\arabic*]
                
                    \item If $t < k$, then $\displaystyle 2 \Ocal_{\QQ(\hspace{-.5ex}\sqrt[n]{a})} = \prod_{j=1}^s\gp_{0,j}^{\frac{n_0}{d}\cdot 2^{m-t-1}}  \prod_{l=1}^t  \prod_{i=1}^r\gp_{l,i}^{\frac{n_0}{d}\cdot 2^{m-t+l-1}} .$ \label{MainIViii1}

\vspace{.05 in}

                    \item If $t\geq k=1$, then $\displaystyle 2 \Ocal_{\QQ(\hspace{-.5ex}\sqrt[n]{a})} = \prod_{i=1}^r\left( \gp_{0,i}\gp_{1,i}\right)^{\frac{n_0}{d}\cdot 2^{m-1}}$. \label{MainIViiinew2}
                    
                    \item If $t=k>1$, then $\displaystyle 2 \Ocal_{\QQ(\hspace{-.5ex}\sqrt[n]{a})} = \prod_{j=1}^s\gp_{2,j}^{\frac{n_0}{d}\cdot 2^{m-t}} \hspace{-.1em} \prod_{i=1}^r \hspace{-.2em}\left(\gp_{0,i}\gp_{1,i}\gp_{3,i}^{2^2}\gp_{4,i}^{2^3}\cdots \gp_{t,i}^{2^{t-1}}\right)^{\frac{n_0}{d}\cdot 2^{m-t}}$, where the product $\prod_{l=3}^t\gp_{l,i}^{2^{l-1}}$ is empty if $t=2$. \label{MainIViiinew3}
\vspace{.05 in}
                    
                    \item If $t>k>1$, then $\displaystyle 2 \Ocal_{\QQ(\hspace{-.5ex}\sqrt[n]{a})} = \prod_{i=1}^r\left( \gp_{0,i}\gp_{1,i}\gp_{2,i}\gp_{2,i}'\gp_{3,i}^{2^2}\cdots \gp_{k,i}^{2^{k-1}} \right)^{\frac{n_0}{d}\cdot 2^{m-k}},$\\
                    where the product $\prod_{l=3}^k\gp_{l,i}^{2^{l-1}}$ is empty if $k=2$. \label{MainIViiinew4} 
                \end{enumerate}
        \end{enumerate}
    \end{enumerate}
\end{theorem}


The complement to Theorem \ref{Thm: Main} for odd $p$ is \cite[Theorem 1.2]{SmithRadicalSplitting}. We restate it here.

\begin{theorem}[The prime ideal factorization of $p\Ocal_{\QQ(\hspace{-.5ex}\sqrt[n]{a})}$]\label{Thm: MainOddp}
Suppose $x^n-a\in \ZZ[x]$ is irreducible with $a$ $n^\text{th}$ power free and let $p$ be an odd integral prime. The ideal factors of $p\Ocal_{\QQ(\hspace{-.5ex}\sqrt[n]{a})}$ mirror the factorization of a monic polynomial $g(x)\in\ZZ[x]$ over $\FF_p[x]$: $\displaystyle \ol{g(x)}=\prod_{i=1}^r \gamma_i(x).$ We use indices to indicate the residue class degree of $\gp_i$ over $p$ corresponds to the degree of $\gamma_i(x)$. In particular, we have the following prime ideal factorizations:

\begin{enumerate}[label=\Roman*., ref=\Roman*]
        \item If $p\dnd na$, then $g(x)=x^n-a$ and $\displaystyle p \Ocal_{\QQ(\hspace{-.5ex}\sqrt[n]{a})} =\prod_{i=1}^r\gp_i$. \label{OddMainI}

        \item Otherwise, if $p\mid a$ and $p\nmid\gcd\big(v_p(a),n\big)$, then, writing $a=a_0p^{hp^k}$ with $p\dnd a_0$, 
        we have $g(x)=x^{\gcd(h,n)}-a_0$ and $\displaystyle p \Ocal_{\QQ(\hspace{-.5ex}\sqrt[n]{a})} =\prod_{i=1}^r\gp_i^{n / \gcd(h,n)}.$ \label{OddMainII}

        \item Otherwise, if $p\mid n$ and $p\nmid a$, then we write $n=p^m n_0$ with $p\dnd n_0$ and we let $w=v_p\big(a^{p-1}-1\big)$. 
        Here $g(x)=x^{n_0}-a$ and $\varphi$ is Euler's totient function. \label{OddMainIII}
        \begin{enumerate}[label=\roman*., ref=III.\roman*]
            \item If $1\leq w \leq m+1$, then $\displaystyle p \Ocal_{\QQ(\hspace{-.5ex}\sqrt[n]{a})} =\prod_{l=1}^w\prod_{i=1}^r\gp_{l,i}^{\varphi(p^{l-1})p^{m-w+1}}$. \label{OddMainIIIi}

            
            \vspace{.05 in}
            
            \item If $w>m+1$, then $\displaystyle  p \Ocal_{\QQ(\hspace{-.5ex}\sqrt[n]{a})} = \prod_{l=1}^{m+1} \prod_{i=1}^r \gp_{l,i}^{\varphi(p^{l-1})} .$ \label{OddMainIIIii}
        \end{enumerate}

\vspace{.05 in}

        \item Otherwise, $p\mid a$ and $p\mid\gcd\big(v_p(a),n\big)$. Recalling $a=a_0p^{hp^k}$ with $p\dnd a_0$, we let $w_0=v_p\big(a_0^{p-1}-1\big)$ and $c=\min(w_0-1,k,m)$. Here we have two polynomials $g(x)$ to consider. The $\gp_{0,*}$ correspond to factors of $\displaystyle g_0(x)=x^{\gcd(n_0,h)}-a_0 \equiv \prod_{\mathfrak{i}=1}^{\mathfrak{r}}\gamma_{\mathfrak{i}}(x) \bmod~p$, while the $\gp_{l,*}$ correspond to 
        $\displaystyle g(x)=x^{\gcd(n_0,h(p-1))}-(-1)^{hp^k}a_0\equiv \prod_{i=1}^r\gamma_i(x)\bmod~p$. Letting $d_0=\gcd(n_0,h)$ and $d=\gcd\big(n_0,h(p-1)\big)$, the factorization of the ideal generated by $p$ in $\Ocal_{\QQ(\hspace{-.5ex}\sqrt[n]{a})}$ is
\[p \Ocal_{\QQ(\hspace{-.5ex}\sqrt[n]{a})} = \prod_{\mathfrak{i}=1}^{\mathfrak{r}} \gp_{0,\mathfrak{i}}^{\frac{ n_0}{d_0} \cdot p^{m-c} }\prod_{l=1}^c\prod_{i=1}^r\gp_{l,i}^{\frac{ n_0 }{d}\cdot p^{m-c}\varphi(p^l)}.\] \label{OddMainIV}
\end{enumerate}
\end{theorem}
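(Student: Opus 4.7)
The plan is to work locally at $p$. The primes of $\Ocal_K$ above $p$ correspond bijectively to the irreducible factors of $x^n-a$ in $\QQ_p[x]$ via the decomposition $K\otimes_{\QQ}\QQ_p \cong \prod_i \QQ_p[x]/(f_i(x))$; for each factor $f_i$, the ramification index and residue degree of the corresponding prime coincide with the local invariants of the extension $\QQ_p[x]/(f_i(x))$ over $\QQ_p$. Thus the theorem reduces to explicitly factoring $x^n-a$ in $\QQ_p[x]$ and identifying each factor's ramification and residue degree in each of the four parameter regimes. The polynomial $g(x)$ in the statement encodes the residue data: its factorization modulo $p$ mirrors the residue-degree structure of the primes, while the ramification exponents come from Newton-polygon and Kummer-theoretic analysis of the individual factors.

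Cases \ref{OddMainI} and \ref{OddMainII} are relatively direct. In Case \ref{OddMainI}, the hypothesis $p\dnd na$ makes $\disc(x^n-a)=\pm n^n a^{n-1}$ a $p$-unit, so $p$ is unramified and Dedekind's theorem applies verbatim. In Case \ref{OddMainII}, writing $a=a_0 p^{hp^k}$, the hypothesis $p\dnd\gcd(hp^k,n)$ forces either $k=0$ or $p\dnd n$, both of which give $\gcd(hp^k,n)=\gcd(h,n)=:d$; a Bezout combination $uh+vn=d$ produces a uniformizer $\pi=(\hspace{-.5ex}\sqrt[n]{a})^u p^v$ of valuation $d/n$ in each local completion, yielding totally ramified extensions of degree $n/d$ whose residue data is captured by the factorization of $x^d-a_0$ modulo $p$.

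Case \ref{OddMainIII} is the heart of the argument. Using the decomposition $\ZZ_p^\times \cong \mu_{p-1}\times(1+p\ZZ_p)$ (valid for odd $p$) and the identity $v_p(u^{p-1}-1)=v_p(u-1)$ for $u\in 1+p\ZZ_p$, one obtains $w=v_p(a^{p-1}-1)=v_p(\la a\ra-1)$, where $\la a\ra$ denotes the principal-unit part of $a$. Combined with the identity $(1+p\ZZ_p)^{p^\ell}=1+p^{\ell+1}\ZZ_p$, this shows that $a$ is a $p^\ell$-th power in $\QQ_p^\times$ if and only if $\ell\leq w-1$. The Kummer tower $\QQ_p\subset\QQ_p(\hspace{-.5ex}\sqrt[p]{a})\subset\cdots\subset\QQ_p(\hspace{-.5ex}\sqrt[p^m]{a})$ is then controlled by $w$: at each genuinely new layer one must adjoin a $p$-power root of unity, and $\QQ_p(\zeta_{p^\ell})/\QQ_p$ is totally ramified of degree $\varphi(p^\ell)$. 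Subcase \ref{OddMainIIIi} corresponds to $w\leq m+1$, where the tower stabilizes at level $w$; subcase \ref{OddMainIIIii} corresponds to $w>m+1$, where all $m+1$ cyclotomic layers appear. The factor $x^{n_0}-a$ modulo $p$ absorbs the tame, odd-part contribution.

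Case \ref{OddMainIV} combines both phenomena and is the main obstacle. With $a=a_0 p^{hp^k}$ and $w_0=v_p(a_0^{p-1}-1)$, two sources of ramification interact: the $p$-valuation part $hp^k$ contributes Case-\ref{OddMainII}-type ramification, while the unit part $a_0$ contributes Case-\ref{OddMainIII}-type ramification. The parameter $c=\min(w_0-1,k,m)$ records the common depth at which they first interact. The plan is to construct the extension in stages, first resolving the unit-part ramification up to level $c$ via the $p^c$-th-root Kummer analysis and then extracting a tame root for the remaining valuation part. The appearance of two polynomials $g_0(x)=x^{\gcd(n_0,h)}-a_0$ and $g(x)=x^{\gcd(n_0,h(p-1))}-(-1)^{hp^k}a_0$ reflects the fact that the primes indexed by $\gp_{0,\mathfrak{i}}$ arise from the pre-cyclotomic valuation layer, while those indexed by $\gp_{l,i}$ with $l\geq 1$ arise from layers intertwined with the cyclotomic tower; the sign $(-1)^{hp^k}$ encodes the Galois conjugation by $\zeta_2$ introduced at the first cyclotomic layer. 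The detailed bookkeeping of the ramification exponents $\tfrac{n_0}{d}p^{m-c}\varphi(p^i)$ then follows by multiplying local degrees through the staged construction.
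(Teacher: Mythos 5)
The paper itself does not prove Theorem~\ref{Thm: MainOddp}; it is restated from~\cite{SmithRadicalSplitting}, so there is no in-paper proof to compare against. What the paper does contain is the proof of the $p=2$ analogue (Theorem~\ref{Thm: Main}): there one first applies V\'elez's explicit factorization of $x^{2^m}-a$ over $\QQ_2$ to split $2$ in the prime-power subfield $\QQ\big(\hspace{-.5ex}\sqrt[2^m]{a}\big)$, and then lifts to $\QQ\big(\hspace{-.5ex}\sqrt[n]{a}\big)$ via Dedekind--Kummer (Lemma~\ref{Lem: Split2nmida}) or via Ore's Newton-polygon dissections (Lemma~\ref{Lem: Split2an0}). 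Your proposal works instead with the local decomposition $K\otimes_\QQ\QQ_p\cong\prod_i\QQ_p[x]/(f_i)$ and a Kummer-tower analysis, which is a reasonable and broadly equivalent organization of the same local data, but as written it is a heuristic outline with several genuine gaps.

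First, in Case~\ref{OddMainII} your uniformizer should come from a B\'ezout identity for the pair $(hp^k,n)$, not $(h,n)$: you want $v_p\big((\sqrt[n]{a})^u p^v\big)=\tfrac{uhp^k+vn}{n}=\tfrac{d}{n}$, and $uh+vn=d$ only gives this when $k=0$. The hypotheses do force $\gcd(hp^k,n)=\gcd(h,n)$, so the final ramification index is unchanged, but the displayed identity is the wrong one when $k\geq 1$.

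Second, Case~\ref{OddMainIII} is only sketched at the level of slogans. The exponents $\varphi(p^{l-1})p^{m-w+1}$ come from the explicit $\QQ_p$-factorization of $x^{p^m}-a$ when $a=b^{p^{w-1}}$ with $b\notin\QQ_p^p$: the factor $x^{p^{m-w+1}}-b$ and, for each $0\leq s\leq w-2$, the Galois orbit of $\zeta_{p^{w-1}}^{p^s u}b^{1/p^{m-w+1}}$, giving one irreducible factor of degree $\varphi(p^{w-1-s})p^{m-w+1}$. One then needs to verify each of these local factors is \emph{totally} ramified, and separately that passing to $\QQ\big(\hspace{-.5ex}\sqrt[n]{a}\big)$ by adjoining the $n_0$-th root is unramified and mirrors $x^{n_0}-a\bmod p$. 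Your sketch asserts the answer but does not establish the factorization, the total-ramification claim, or the residue-degree lift.

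Third, Case~\ref{OddMainIV} is the crux, and the proposal does not contain enough to recover the stated formula. The statement ``the sign $(-1)^{hp^k}$ encodes the Galois conjugation by $\zeta_2$'' is not meaningful here---$p$ is odd, there is no $\zeta_2$ beyond $\pm1$ in play, and for odd $p$ one has $(-1)^{hp^k}=(-1)^h$, a sign that has to be extracted from the residual polynomial in Ore's dissection (the residue of $a/\pi_\gp^{e\cdot v_\gp(a)}$, where the uniformizer of the wildly ramified subfield contributes a sign). Likewise the split into $g_0$-indexed primes and $g$-indexed primes, and the exponent $p^{m-c}\varphi(p^l)$ with $c=\min(w_0-1,k,m)$, need to be derived from an explicit local factorization---none of this follows from the ``staged construction'' as described. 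If you want to carry this through, you will in effect need to redo the analogue of V\'elez's Theorem~\ref{Thm: Velez2} for odd $p$ (which is what~\cite{VelezPrimePower} and~\cite{SmithRadicalSplitting} actually do) and then apply the analogues of Lemmas~\ref{Lem: Split2nmida} and~\ref{Lem: Split2an0}.
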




As an application, we classify the common $N$-index divisors of $\QQ\big(\hspace{-.5ex}\sqrt[n]{a}\big)$. Fix $N>0$ and let $K$ be a number field. We say a prime $p$ is a \textit{common $N$-index divisor}, or \textit{C$N$-ID} for brevity, if for any $\big(\theta_1, \dots, \theta_N\big)\in \Ocal_K^N$ the index of $\ZZ\big[\theta_1,\dots, \theta_N\big]$ in $\Ocal_K$ is divisible by $p$ (or infinite in the case where $\QQ\big(\theta_1,\dots, \theta_N\big)\neq K$). In particular, if $p$ is a common $N$-index divisor, then at least $N+1$ ring generators are required to generate $\Ocal_K$ over $\ZZ$. 

For $N=1$, these are the primes whose splitting prevents monogenicity. Indeed the first non-monogenic field was constructed by Dedekind \cite{Dedekind}. He showed that in the cubic field given by $x^3-x^2-2x-8$, the prime 2 is a common index\footnote{Common $1$-index divisors are called \textit{common index divisors}. They are also called \textit{essential discriminant divisors} and \textit{inessential or nonessential discriminant divisors}. The shortcomings of the English nomenclature are partly due to what Neukirch \cite[page 207]{Neukirch} calls ``the untranslatable German catch phrase [...] \textit{au{\ss}erwesentliche Diskriminantenteile}." 
See the final pages of Keith Conrad's exposition \href{https://kconrad.math.uconn.edu/blurbs/gradnumthy/dedekind-index-thm.pdf}{\textit{Dedekind's Index Theorem}} for a detailed explanation of the seemingly contradictory nomenclature. For an excellent historical perspective, see \cite{CIDDBook}.} divisor.  


The work of Pleasants \cite{Pleasants} building on the work of Hensel \cite{Hensel1894} and Dedekind \cite{Dedekind} for $N=1$ shows that common $N$-index divisors can be classified from splitting information. Indeed, Pleasants' main theorem (Theorem \ref{Thm: PleasantsMain}) allows us to use Theorems \ref{Thm: Main} and \ref{Thm: MainOddp} to fully understand common $N$-index divisors in radical extensions. 

First, we let $q=p^N$ and recall that Gauss's formula for the number of monic irreducible polynomials of degree $f$ over $\FF_{q}$ is
\[\Irr(f,q)\coloneqq \frac{1}{f}\sum_{d\mid f}\mu\left(\frac{f}{d}\right)q^d, \text{ where } \mu \text{ is the M\"obius function.}\]


\begin{corollary}[Classification of 2 as a common $N$-index divisor]\label{Cor: 2NCID}
Fix $N>0$ and keep the notation of Theorem \ref{Thm: Main}.
\begin{enumerate}[label=\Roman*., ref=\Roman*]
\item[I. \& II.] If $n$ is odd or if $n$ is even but $2\nmid v_2(a)$, then 2 is not a C$N$-ID of $\QQ\big(\hspace{-.5ex} \sqrt[n]{a}\big)$. \label{CorNCIDsIandII}

\setcounter{enumi}{2}

\item If $n$ is even and $a$ is odd, then let $d_f$ (respectively, $D_f$) be the number of irreducible factors of degree $f$ (respectively, $\frac{f}{2}$) in the factorization of $x^{n_0}-1$ into irreducibles in $\FF_2[x]$ (respectively, $\FF_4[x]$). Note that if $f$ is odd, then $D_f=0$. \label{CorNCIDsIII}
\begin{enumerate}[label=\roman*., ref=III.\roman*]
\item If $w=1$, then 2 is not a C$N$-ID of $\QQ\big(\hspace{-.5ex} \sqrt[n]{a}\big)$. \label{CorNCIDsIIIi}

\item If $2\leq w \leq m+1$, then 2 is a C$N$-ID of $\QQ\big(\hspace{-.5ex} \sqrt[n]{a}\big)$ if and only if 
$\displaystyle D_f + (w-2) \cdot d_f>\Irr\big(f,2^N\big)$ for some $f\in\ZZ^+$. 
\label{CorNCIDsIIIii}

\item If $w > m+1$, then 2 is a C$N$-ID of $\QQ\big(\hspace{-.5ex} \sqrt[n]{a}\big)$ if and only if 
$\displaystyle(m+1) \cdot d_f>\Irr\left(f,2^N\right)$ for some $f\in\ZZ^+$. \label{CorNCIDsIIIiii}
\end{enumerate}

\item If both $a$ and $\gcd\big(v_2(a),n\big)$ are even, then let $d_{f}$ and $D_f$ be as above but corresponding to the factorization of $x^{\gcd(h,n_0)}-1$ into irreducibles.\label{CorNCIDsIV}
\begin{enumerate}[label= Case \O., ref=IV.\O]
    \item If $k\geq m$, then the conclusion is as in \ref{CorNCIDsIII} but with $w_0$ replacing $w$. \label{CorNCIDsIV0}
\end{enumerate}
Suppose now that $k<m$.
\begin{enumerate}[label=\roman*., ref=IV.\roman*]
\item If $w_0=1$, then 2 is a C$N$-ID of $\QQ\big(\hspace{-.5ex} \sqrt[n]{a}\big)$ if and only if $N=1$, $k=1$, and for some 
$f\in\ZZ^+$, $\displaystyle v_2\big(a_0+~1\big)=3$ and $\displaystyle D_f> \Irr(f,2)$, or $\displaystyle v_2\big(a_0+~1\big)\geq 4$  and $\displaystyle 2\cdot d_f > \Irr(f,2).$ \label{CorNCIDsIVi}

\item If $w_0=2$, then 2 is a C$N$-ID of $\QQ\big(\hspace{-.5ex} \sqrt[n]{a}\big)$ if and only if $N=1$ and $\displaystyle D_f>\Irr(f,2)$ for some $f\in\ZZ^+$. \label{CorNCIDsIVii} 

\item If $w_0\geq 3$, then there are four cases: \label{CorNCIDsIViii}

\begin{enumerate}[label=\arabic*., ref=IV.iii.\arabic*]

    \item If $w_0-2<k$, then 2 is a C$N$-ID of $\QQ\big(\hspace{-.5ex} \sqrt[n]{a}\big)$ if and only if\\ $\displaystyle D_f + (w_0-2) \cdot d_f>\Irr\left(f,2^N\right)$ for some $f\in\ZZ^+$. \label{CorNCIDsIViii1new} 

    \item If $w_0-2 \geq k=1$, then 2 is a C$N$-ID of $\QQ\big(\hspace{-.5ex} \sqrt[n]{a}\big)$ if and only if $N=1$ and  $\displaystyle 2\cdot d_f>\Irr\left(f,2\right)$ for some $f\in\ZZ^+$. \label{CorNCIDsIViii2new}

    \item If $w_0-2=k>1$, then 2 is a C$N$-ID of $\QQ\big(\hspace{-.5ex} \sqrt[n]{a}\big)$ if and only if\\ $\displaystyle D_f + (w_0-2) \cdot d_f>\Irr\left(f,2^N\right)$ for some $f\in\ZZ^+$. \label{CorNCIDsIViii3new}
    
    \item If $w_0-2>k>1$, then 2 is a C$N$-ID of $\QQ\big(\hspace{-.5ex} \sqrt[n]{a}\big)$ if and only if\\ $\displaystyle (k+2) \cdot d_f>\Irr\left(f,2^N\right)$ for some $f\in\ZZ^+$. \label{CorNCIDsIViii4} 

\end{enumerate}
\end{enumerate}
\end{enumerate}
\end{corollary}


For odd primes $p$, the following result generalizes the classification of common index divisors in \cite{SmithRadicalSplitting}:  

\begin{corollary}[Classification of $p$ as a common $N$-index divisor]\label{Cor: oddNCIDs}
Let $p$ be an odd integral prime, fix $N>0$, and keep the notation of Theorem~\ref{Thm: MainOddp}.
\begin{enumerate}[label=\Roman*., ref=\Roman*]
\item[I. \& II.] If $p\dnd n$ or $p\mid n$ but $p\dnd v_p(a)$, 
then $p$ is not a C$N$-ID of $\QQ\big(\hspace{-.5ex} \sqrt[n]{a}\big)$. \label{CorOddNCIDsIandII}

\setcounter{enumi}{2}

\item If $p\mid n$ and $p\nmid a$, then let $d_f$ be the number of irreducible factors of degree $f$ in the factorization of $x^{n_0}-a$ into irreducibles in $\FF_p[x]$.
The prime $p$ is a C$N$-ID of $\QQ\big(\hspace{-.5ex} \sqrt[n]{a}\big)$ if and only if 
\[\min(w,m+1) \cdot d_f>\Irr\left(f,p^N\right) \text{ for some } f\in\ZZ^+.\] \label{CorOddNCIDsIII}

\item If $p$ divides $n$, $a$, and $v_p(a)$, then let $\mathfrak{d}_{f}$ be the number of irreducible factors of degree $f$ in the factorization of $x^{\gcd(n_0,h)}-a_0\in \FF_p[x]$ and let $d_f$ be the number of irreducible factors of degree $f$ in the factorization of $x^{\gcd(n_0,h(p-1))}-(-1)^{h}a_0 \in \FF_p[x].$
The prime $p$ is a C$N$-ID of $\QQ\big(\hspace{-.5ex} \sqrt[n]{a}\big)$ if and only if 
\[\mathfrak{d}_{f}+\min(w_0-1,k,m) \cdot d_f > \Irr\left(f,p^N\right) \text{ for some } f\in\ZZ^+.\]\label{CorOddNCIDsIV}
\end{enumerate}
\end{corollary}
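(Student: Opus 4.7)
The plan is to invoke Pleasants' theorem (Theorem \ref{Thm: PleasantsMain}), which asserts that $p$ is a C$N$-ID of a number field $K$ if and only if, for some positive integer $f$, the number of prime ideals of $\Ocal_K$ lying above $p$ with residue degree $f$ strictly exceeds $\Irr(f,p^N)$; ramification plays no role beyond counting distinct primes. Thus each case of the corollary reduces to reading off the residue degrees of the primes above $p$ from the corresponding case of Theorem~\ref{Thm: MainOddp} and comparing the resulting tallies to $\Irr(f,p^N)$.

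For Cases~I and~II, Theorem~\ref{Thm: MainOddp} presents $p\Ocal_{\QQ(\hspace{-.5ex}\sqrt[n]{a})}$ as a product of $r$ (possibly ramified) primes $\gp_i$ whose residue degrees are exactly $\deg\gamma_i$, where the $\gamma_i(x)$ are the distinct monic irreducible factors of a single polynomial $g(x)$ modulo $p$. For any $f$, the number of primes above $p$ of residue degree $f$ is therefore bounded by the number of degree-$f$ monic irreducibles in $\FF_p[x]$, i.e. by $\Irr(f,p)$. A brief check using Gauss's formula shows $\Irr(f,q)$ is non-decreasing in $q$, so $\Irr(f,p)\leq \Irr(f,p^N)$, and Pleasants' criterion is never met. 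Hence $p$ is not a C$N$-ID.

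For Case~III, the factorization in Theorem~\ref{Thm: MainOddp} organizes the primes above $p$ into $\min(w,m+1)$ layers indexed by $l$, each layer containing $r$ primes $\gp_{l,i}$ whose residue degrees equal $\deg\gamma_i$, the degrees of the irreducible factors of $x^{n_0}-a$ in $\FF_p[x]$. Summing over the layers, the number of primes above $p$ of residue degree $f$ is exactly $\min(w,m+1)\cdot d_f$, and Pleasants' theorem yields the stated equivalence.

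Case~IV is analogous but features two distinct polynomial contributions. The primes $\gp_{0,\mathfrak{i}}$ match the factors $\gamma_{\mathfrak{i}}(x)$ of $g_0(x)=x^{\gcd(n_0,h)}-a_0$ modulo $p$, while for each $l=1,\ldots,c=\min(w_0-1,k,m)$ one obtains $r$ primes $\gp_{l,i}$ corresponding to the factors $\gamma_i(x)$ of $g(x)=x^{\gcd(n_0,h(p-1))}-(-1)^{hp^k}a_0$ modulo $p$ (noting that $p$ is odd and $k\geq 1$, so $p^k$ is odd and $(-1)^{hp^k}=(-1)^h$, matching the $d_f$ defined in the corollary). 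Adding the two contributions gives $\mathfrak{d}_f+c\cdot d_f$ primes above $p$ of residue degree $f$, and Pleasants' criterion produces the stated equivalence. The main obstacle is organizational rather than conceptual: one must confirm that the primes appearing in different layers in Case~III, and of the two types $\gp_{0,\cdot}$ versus $\gp_{l,\cdot}$ in Case~IV, are genuinely distinct, so that the residue-degree multiplicities truly add. This is immediate from the convention in Theorem~\ref{Thm: MainOddp} that the displayed product is a factorization into distinct prime ideals.
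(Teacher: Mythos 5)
Your proposal is correct and follows essentially the same route as the paper: read off the counts of primes of each residue degree from Theorem~\ref{Thm: MainOddp} and feed them into Pleasants' criterion (Theorem~\ref{Thm: PleasantsMain}). The only cosmetic difference is in Cases~I and~II, where the paper defers to the $N=1$ case of \cite{SmithRadicalSplitting} and the implication that a C$N$-ID is a C1-ID, while you argue directly via the bound $\lambda_f\le\Irr(f,p)\le\Irr(f,p^N)$; both are valid, and your observation that $(-1)^{hp^k}=(-1)^h$ reconciling the notation in Case~IV is a helpful clarification the paper leaves implicit.
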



\subsection{Organization}

Above we have stated the main theorem (Theorem \ref{Thm: Main}), its complement for odd $p$ (Theorem \ref{Thm: MainOddp} proved in \cite{SmithRadicalSplitting}), and the respective corollaries describing common $N$-index divisors (Corollaries \ref{Cor: 2NCID} and \ref{Cor: oddNCIDs}). 
Despite almost verbatim overlap with \cite{SmithRadicalSplitting}, for the convenience of the reader, we undertake a brief summary of previous work (Subsection~\ref{Sec: PrevWork}) and a description of the initial dissections of the Montes algorithm/Ore's factorization theorem (Section \ref{Sec: Montes}). Section \ref{Sec: ProofMain} is devoted to the proof of Theorem \ref{Thm: Main}. The foundation of this proof is \cite[Theorem 7]{VelezPrimePower}, V\'elez's classification of the splitting of 2 in $\QQ\big(\hspace{-.5ex}\sqrt[2^m]{a}\big)$. Section \ref{Sec: NCIDs} undertakes the proof of Corollaries \ref{Cor: 2NCID} and \ref{Cor: oddNCIDs}. Pleasants's result \cite[Theorem 3]{Pleasants} on the connection between prime splitting and common $N$-index divisors is key here. Novel examples of number rings with common $N$-index divisors are included as well; see Example \ref{Ex: Using2togetNgenerators} and Proposition \ref{Prop: NonRadCommonN}. Section \ref{Sec: Examples} is devoted to a number of examples of Theorem \ref{Thm: Main}. 
Section \ref{Sec: NonMonoNoCIDs} gives a new construction of non-monogenic fields without common index divisors, \textit{exceptional} number fields in Pleasants's terminology. 
The construction 
is distinctive in that it uses prescribed vanishing of terms in the index form to create exceptional radical extensions of arbitrary degree $n>2$.


\subsection{Previous work}\label{Sec: PrevWork}

There has been a good deal of recent work on common 1-index divisors. See \cite{MR2251710}, \cite{MR3889325}, \cite{MR4736305}, \cite{MR4563435}, \cite{MR2531162}, \cite{MR4510645}, \cite{MR731633}, as well as other works from these authors for various families of polynomials/fields of a fixed degree. When consulting the literature, it bears noting that the question of whether or not a given a binomial (radical) polynomial or a trinomial polynomial is monogenic has been fully answered. See \cite{g17} and \cite{JKS1}, respectively. For families of varying degree, \cite{PethoPohst} study multiquadratic fields, while \cite{SpearmanWilliamsYang} study fields generated by polynomials of prime degree with dihedral Galois groups. 

\cite{CIDDBook}, the recent book by Gouv\^ea and Webster, provides a vibrant historical account of the impact of common index divisors on the development of algebraic number theory. Further, the book includes new translations of two of the seminal papers already referenced: \cite{Dedekind} and \cite{Hensel1894}.

We note that \cite{Berwick1927} uses Newton polygon techniques to establish an integral basis for $\QQ(\hspace{-.5ex} \sqrt[n]{a})$. In some cases, this work can describe the splitting of primes dividing $na$, but it does not fully describe splitting.  
The general method is subsumed by earlier work of Ore \cite{OreReview}. 

In \cite{ObusWild}, the author computes bounds on the conductors of extensions obtained by a root of unity and a radical. In a particular case when $p=2$, exact values of the conductor are computed. \cite{Viviani} constructs a uniformizer for $\QQ_p\big(\zeta_p, \sqrt[p^{i+1}]{a}\big)/\QQ_p\big(\zeta_p, \sqrt[p^{i}]{a}\big)$. \cite{BellemareLei} describes a uniformizer for the extension $\QQ_p\big(\zeta_{p^2},\sqrt[p]{p}\big)/\QQ_p$, while \cite{WangYuan} continues down this avenue by constructing a uniformizer for $\QQ_p\big(\zeta_{p^2},\sqrt[p^m]{p}\big)/\QQ_p$ with $m\geq 1$. 

In \cite{Velez1/p}, V\'elez describes the factorization of a prime above $p$ in any extension of a number field obtained by adjoining a $p^{\text{th}}$ power or a $p^{\text{th}}$ root of unity. In the series of papers \cite{MannVelez} and \cite{VelezCoprime}, prime ideal splitting in radical extensions is described in a variety of cases. 
In \cite{VelezPrimePower}, V\'elez completely describes the splitting of the prime $p$ in a $p$-power radical extension of $\QQ$. 
This work will be our jumping off point (Theorem \ref{Thm: Velez2}) in addressing the more difficult cases (Cases \ref{MainIII} and \ref{MainIV}) of Theorem \ref{Thm: Main}.
V\'elez's primary application was describing the genus field of $\QQ\big(\hspace{-.5ex} \sqrt[n]{a}\big)$, while our application is the classification of common $N$-index divisors of $\QQ\big(\hspace{-.5ex} \sqrt[n]{a}\big)$. Thus, we have rephrased and slightly reinterpreted V\'elez's results in order to suit our goals. 


\section{The Montes algorithm and a theorem of Ore}\label{Sec: Montes}


The Montes algorithm is a $p$-adic factorization algorithm that is based on and extends the pioneering work of {\O}ystein Ore \cite{Ore}. We will essentially only employ the aspects developed by Ore here, but we will use the notation and setup of the general implementation. For the complete development of the Montes algorithm, see \cite{GMN}. Our notation will roughly follow \cite{ElFadilMontesNart}, which gives a more extensive summary than we undertake here. One can also consult \cite{JhorarKhanduja}.

Let $p$ be an integral prime, $K$ a number field with ring of integers $\Ocal_K$, and $\gp$ a prime of $K$ above $p$. Write $K_\gp$ to denote the completion of $K$ at $\gp$. By a \textit{uniformizer at $\gp$} or a \textit{uniformizer of $K_\gp$}, we mean an element $\pi_\gp\in\Ocal_K$ such that $v_\gp\big(\pi_\gp\big)=1$. 
Suppose we have a monic, irreducible polynomial $f(x)\in \Ocal_K[x]$. We extend the standard $\gp$-adic valuation to $\Ocal_K[x]$ by defining the $\gp$-adic valuation of $f(x) = a_n x^n + \cdots + a_1 x + a_0 \in \Ocal_K[x]$ to be 
	\[ v_\gp\big(f(x)\big) = \min_{0 \leq i \leq n} \big( v_\gp(a_i) \big). \]
This is sometimes called the \textit{Gauss valuation.}
If $\phi(x), f(x) \in \Ocal_K[x]$ are monic and such that $\deg \phi \leq \deg f$, then we can write
    	\[f(x)=\sum_{i=0}^k a_i(x)\phi(x)^i,\]
for some $k$, where each $a_i(x) \in \Ocal_K[x]$ has degree less than $\deg \phi$. We call the above expression the \emph{$\phi$-adic development} of $f(x)$. We associate to the $\phi$-adic development of $f(x)$ an open Newton polygon by taking the lower convex hull
of the integer lattice points $\big(i,v_p(a_i(x))\big)$. The sides of the Newton polygon with negative slope are the \emph{principal $\phi$-polygon}. 

Write $k_\gp$ for the residue field $\Ocal_K/\gp$, and let $\ol{f(x)}$ be the image of $f(x)$ in $k_\gp[x]$. It will often be the case that we develop $f(x)$ with respect to an irreducible factor $\phi(x)$ of $\ol{f(x)}$. In this situation, we will want to consider the extension of $k_\gp$ obtained by adjoining a root of $\phi(x)$. We denote this finite field by $k_{\gp,\phi}$. We associate to each side of the principal $\phi$-polygon a polynomial in $k_{\gp,\phi}[y]$. Suppose $S$ is a side of the principal $\phi$-polygon with initial vertex $\big(s,v_\gp(a_s(x))\big)$, terminal vertex $\big(k,v_\gp(a_k(x))\big)$, and slope $-\frac{h}{e}$ written in lowest terms. Define the length of the side to be $l(S)=k-s$ and the degree to be $d\coloneqq\frac{l(S)}{e}$. Let $\red:\Ocal_K[x]\to k_{\gp,\phi}$ denote the homomorphism obtained by quotienting by the ideal $\big(\gp,\phi(x)\big)$.
For each $i$ in the range $b\leq i\leq k$, we define the residual coefficient to be
\[c_i=\left\{
\begin{array}{ll}
0 \text{ if }  \big(i,v_\gp(a_i(x))\big)  \text{ lies strictly above } S  \text{ or } v_\gp(a_i(x))=\infty,\\
\red\left(\frac{a_i(x)}{\pi^{v_\gp(a_i(x))}}\right)  \text{ if }  \big(i,v_\gp(a_i(x))\big) \text{ lies on } S.
\end{array}
\right.\]
Finally, the \emph{residual polynomial} of the side $S$ is the polynomial
\[R_S(y)=c_s+c_{s+e}y+\cdots +c_{s+(d-1)e}y^{d-1}+c_{s+de}y^d\in k_{\gp,\phi}[y].\]
Notice that $c_s$ and $c_{s+de}$ are always nonzero since they are the initial and terminal vertices, respectively, of the side $S$. In this work, we will almost always be developing $f(x)$ with respect to a linear polynomial, so $k_{\gp,\phi}=k_\gp$, and we will often write the latter to ease notation.


Having established notation, we state a theorem that connects prime splitting and polynomial factorization. The ``three dissections" that we will outline below are due to Ore, and the full Montes algorithm is an extension of this. Our statement loosely follows Theorem 1.7 of \cite{ElFadilMontesNart}. 

\begin{theorem}\label{Thm: Ore}[Ore's Three Dissections]
Let $f(x)\in \Ocal_K[x]$ be a monic irreducible polynomial and let $\alpha$ be a root. Suppose
\[\ol{f(x)}=\phi_1(x)^{r_1}\cdots \phi_s(x)^{r_s}.\]
is a factorization into irreducibles in $k_\gp[x]$. Hensel's lemma shows $\phi_i(x)^{r_i}$ corresponds to a factor of $f(x)$ in $K_\gp[x]$ and hence to a factor $\gm_i$ of $\gp \Ocal_{K(\alpha)}$. 

Choose a lift of $\phi_i(x)$ to $\Ocal_K[x]$ and, abusing notation, call this lift $\phi_i(x)$. Developing $f(x)$ with respect to $\phi_i(x)$, suppose the principal $\phi_i$-polygon has sides $S_1,\dots, S_g$. Each side of this polygon corresponds to a distinct factor of $\gm_i$. 

Write $\gn_j$ for the factor of $\gm_i$ corresponding to the side $S_j$. Suppose $S_j$ has slope $-\frac{h}{e}$. If the residual polynomial $R_{S_j}(y)$ is separable, then the prime factorization of $\gn_j$ mirrors the factorization of $R_{S_j}(y)$ in $k_{\gp,\phi_i}[y]$, but every factor of $R_{S_j}(y)$ will have an exponent of $e$. In other words,
\[\text{if } R_{S_j}(y)=\gamma_1(y)\dots\gamma_k(y) \ \text{ in }  \ k_{\gp,\phi_i}[y], \ \text{ then } \ \gn_j = \gP_1^{e}\cdots \gP_k^{e}  \ \text{ in } \ \Ocal_{K(\alpha)},\]
with $\deg(\gamma_m)$ equaling the residue class degree of $\gP_m$ over $k_{\gp,\phi_i}$ for each $1\leq m\leq k$.
In the case where $R_{S_j}(y)$ is not separable, further developments are required to factor $\gp$. 
\end{theorem}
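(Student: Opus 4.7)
The plan is to carry out Ore's three dissections in sequence. First, since $\ol{f(x)} = \prod_{i=1}^s \phi_i(x)^{r_i}$ with the $\phi_i$ pairwise coprime in $k_\gp[x]$, Hensel's lemma lifts this to a factorization $f(x) = \prod_{i=1}^s F_i(x)$ in $K_\gp[x]$ with $\ol{F_i(x)} = \phi_i(x)^{r_i}$. The standard correspondence between $\gp$-adic factors of $f(x)$ and primes of $\Ocal_{K(\alpha)}$ above $\gp$ then identifies $F_i(x)$ with the factor $\gm_i$ of $\gp\Ocal_{K(\alpha)}$, completing the first dissection.

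For the second dissection, fix $i$ and expand $F_i(x)$ $\phi_i$-adically. I would first check that the principal $\phi_i$-polygon of $F_i(x)$ agrees with that of $f(x)$: the contributions from $F_j(x)$ for $j \neq i$ are $\gp$-adic units modulo $\phi_i$, so they do not alter the polygon. Then I would invoke the $\phi$-adic generalization of the classical Newton polygon factorization theorem over a complete discretely valued field. If the principal $\phi_i$-polygon has sides $S_1, \dots, S_g$ of distinct slopes, this yields a factorization $F_i(x) = \prod_j G_{i,j}(x)$ in $K_\gp[x]$, where $G_{i,j}$ has $\phi_i$-polygon precisely $S_j$. The argument mirrors the classical case: one separates slopes by a Hensel-type lift carried out with respect to the $\phi_i$-adic structure on $\Ocal_{K_\gp}[x]$. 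The factor $G_{i,j}$ produces the subfactor $\gn_j$ of $\gm_i$.

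The third dissection is the most delicate. Let $-h/e$ be the slope of $S_j$ in lowest terms and let $\pi_\gp$ be a uniformizer at $\gp$. For any root $\alpha$ of an irreducible factor of $G_{i,j}(x)$, a direct valuation computation shows that $\phi_i(\alpha)^e \pi_\gp^{-h}$ is a unit in the integral closure of $\Ocal_{K_\gp}$ inside $K_\gp(\alpha)$, and its image in the residue field is a root of $R_{S_j}(y)$ over $k_{\gp,\phi_i}$. When $R_{S_j}(y) = \prod_m \gamma_m(y)$ is separable, the coprimality of the $\gamma_m$ produces distinct residues, and a refined Hensel lift inside the mixed extension of $K_\gp$ obtained by adjoining an unramified root of $\phi_i$ together with an $e$-th root of $\pi_\gp$ splits $G_{i,j}(x)$ into irreducible factors $H_m(x)$, one for each $\gamma_m$. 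Each $H_m(x)$ defines an extension of $K_\gp$ of ramification index $e$ and residual degree $\deg(\phi_i)\deg(\gamma_m)$ over $k_\gp$, which translates to $\gn_j = \prod_m \gP_m^e$ with $[\Ocal_{K(\alpha)}/\gP_m : k_{\gp,\phi_i}] = \deg(\gamma_m)$, as claimed.

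The main obstacle is the lifting step in the third dissection: one must control the interaction of the unramified extension generated by the roots of $\phi_i$ with the totally ramified extension accommodating slope $-h/e$, and verify that separability of $R_{S_j}(y)$ is precisely the hypothesis enabling the refined Hensel lift to yield genuinely distinct irreducible factors. When separability fails, roots of $R_{S_j}(y)$ collide in the residue field, the lift breaks down, and one must proceed to higher-order developments, which is exactly where the full Montes algorithm extends beyond Ore's three dissections.
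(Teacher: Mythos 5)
The paper does not prove Theorem~\ref{Thm: Ore}. It is stated as a known theorem of Ore, with the formulation modeled (by the authors' own remark) on Theorem~1.7 of \cite{ElFadilMontesNart}, and the reader is referred to \cite{Ore}, \cite{GMN}, \cite{ElFadilMontesNart}, and \cite{JhorarKhanduja} for the development. There is therefore no in-paper proof against which to compare your argument.

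As for your sketch: the overall architecture (Hensel lift to isolate each $F_i$; slope separation on the principal $\phi_i$-polygon; residual-polynomial factorization) matches the classical structure of Ore's three dissections, and the valuation computation showing $\phi_i(\alpha)^e\pi_\gp^{-h}$ is a unit with reduction a root of $R_{S_j}(y)$ is the right starting point for the third dissection. However, the heart of the third dissection is glossed over in a way that would not survive scrutiny. Asserting that a ``refined Hensel lift inside the mixed extension of $K_\gp$'' (unramified extension by a root of $\phi_i$ plus $\sqrt[e]{\pi_\gp}$) ``splits $G_{i,j}(x)$ into irreducible factors $H_m(x)$'' skips the actual difficulty: one obtains a factorization over the auxiliary extension, and one must then descend it to $K_\gp[x]$ and verify that the descended factors are irreducible with exactly the claimed ramification index $e$ and residue degree $\deg(\phi_i)\deg(\gamma_m)$. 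That descent and irreducibility are precisely what Ore's ``Theorem of the polygon'' and ``Theorem of the residual polynomial'' establish, and separability of $R_{S_j}$ (Ore's regularity) is the exact hypothesis under which those two results combine to give a complete prime factorization in a single pass. Your outline is consistent with this route but does not supply the argument where the theorem actually lives; as written it would have to cite the same sources the paper cites rather than constituting a proof.
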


Though Theorem \ref{Thm: Ore} describes general $\gp$-adic factorizations, we will be working with monic integral polynomials, so our factorizations will be over the $\gp$-adic integers. 


In addition to Ore's Theorem, it will be convenient to have the following classical result on the irreducibility of radical polynomials. For a reference see \cite{Turnwald} or Chapter 6, \S 9 of \cite{Algebra}.

\begin{theorem}[The Vahlen--Capelli Theorem]\label{Thm: VahlenCapelli}
Let $K$ be a field and $x^n-a\in K[x]$. Then $x^n-a$ is reducible over $K$ if and only if for some prime $p\mid n$ we have $a\in K^p$ or $4\mid n$ and $a\in -4K^4$.
\end{theorem}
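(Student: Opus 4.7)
The plan is to prove the two directions separately, with the converse proceeding by induction on $n$. For the forward direction, I would exhibit explicit factorizations. If $a = b^p$ with $p$ a prime divisor of $n$, write $n = pm$ and use the geometric-series factorization
\[ x^n - a = x^{pm} - b^p = (x^m - b)\bigl(x^{m(p-1)} + b x^{m(p-2)} + \cdots + b^{p-1}\bigr), \]
both factors having positive degree. If $4 \mid n$ and $a = -4c^4$, write $n = 4m$ and apply the Sophie Germain identity
\[ x^{4m} + 4c^4 = (x^{2m} + 2cx^m + 2c^2)(x^{2m} - 2cx^m + 2c^2), \]
producing a nontrivial factorization over $K$.

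For the converse, I would induct on $n$, first reducing to the prime-power case. If $n = mk$ with $\gcd(m,k) = 1$ and $\alpha$ is a root of $x^n - a$, then $\alpha^m$ satisfies $x^k - a$, $\alpha^k$ satisfies $x^m - a$, and Bezout gives $\alpha = (\alpha^m)^s(\alpha^k)^t \in K(\alpha^m,\alpha^k)$ for appropriate integers $s,t$. Comparing degrees,
\[ [K(\alpha):K] \leq [K(\alpha^m):K] \cdot [K(\alpha^k):K] \leq km = n, \]
so if $x^n - a$ is reducible (choosing $\alpha$ so that $[K(\alpha):K] < n$) then at least one of $x^m - a$, $x^k - a$ must be reducible; the inductive hypothesis then supplies the required condition, which transports to $n$ because any prime divisor of $m$ or $k$ divides $n$ and $4 \mid m$ or $4 \mid k$ forces $4 \mid n$. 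For the prime-power case $n = p^r$, I would take $\alpha$ a root of $x^{p^r} - a$ whose minimal polynomial over $K$ has degree $p^s$ with $s < r$. Since the roots of $x^{p^r} - a$ in $\overline{K}$ have the form $\alpha \zeta^i$ for $\zeta$ a primitive $p^r$-th root of unity, the constant term of this minimal polynomial equals $\pm \alpha^{p^s}\omega$ for some $p^r$-th root of unity $\omega$. Setting $b = \pm \alpha^{p^s}\omega \in K$ and raising to the $p^{r-s}$ power yields
\[ a = b^{p^{r-s}}\eta, \qquad \eta := \omega^{-p^{r-s}} \in K, \]
where $\eta$ is a $p^s$-th root of unity lying in $K$. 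For $p$ odd, a descent using the cyclic structure of $p$-power roots of unity in $K$ extracts a $p$-th root of $a$ in $K$, completing this case.

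The hard part will be the $p = 2$ analysis. Because $-1$ always lies in $K$, the element $\eta$ may equal $-1$ even when $K$ contains no primitive fourth root of unity, and this case must be treated separately: the relation $a = -b^{2^{r-s}}$ is not manifestly of the form $c^2$ or $-4c^4$, and extracting the latter form requires tracing the minimal-polynomial data through the intermediate cyclotomic extension $K(i)/K$ and carefully handling whether $-1 \in K^2$. It is this case-by-case analysis in the $2$-adic setting that produces the sporadic exceptional case $a \in -4K^4$ and is the crux of the classical proofs in \cite{Turnwald} and \cite{Algebra}.
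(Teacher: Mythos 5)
The paper does not prove this theorem; it is stated as a classical fact with citations to Turnwald and to Lang's \emph{Algebra} (Ch.\ VI, \S 9), so there is no proof in the paper against which to compare. Judged on its own terms, your sketch is sound in the forward direction (the geometric-series and Sophie Germain factorizations) and in the coprime reduction step: the degree bookkeeping with $\gcd(m,k)=1$ correctly forces one of $x^m-a$, $x^k-a$ to be reducible. The converse has genuine gaps, however, and they lie exactly where you flag the argument as sketchy.

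For the odd prime-power case, your descent is not closed. After absorbing signs you arrive at $a = b^{p^{r-s}}\eta$ with $\eta\in K$ a $p^{s}$-th root of unity and $r-s\ge 1$, so $b^{p^{r-s}}\in K^p$ and $-1\in K^p$; but you still need $\eta\in K^p$, i.e.\ that $K$ contains a $p^{s+1}$-th root of unity whenever $\eta$ is a primitive $p^{s}$-th root of unity, and nothing you wrote forces this. (Your unstated assumption that one may choose $\alpha$ whose degree over $K$ is a pure power of $p$ is also unjustified; one should instead write $\deg\alpha = p^{s}d'$ with $p\nmid d'$ and argue from $a^{d'}$.) The standard route (as in Lang) avoids both issues: it inducts on $r$ through the tower $K\subset K(\alpha^{p^{r-1}})\subset K(\alpha)$, using that if $\alpha^{p^{r-1}}\in K(\alpha^{p^{r-1}})^p$ then its norm to $K$, which equals $\pm a$, lies in $K^p$ --- a clean contradiction that never introduces the stray root of unity $\eta$. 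Finally, the $p=2$ case is not merely ``the hard part'' to be deferred; it is where the exceptional clause $a\in -4K^4$ originates, and the sketch gives no indication of how the relation $a=-b^{2^{r-s}}$ gets resolved into $a\in K^2$ or $a\in -4K^4$. As written, the proof is incomplete for $p=2$ and has a real gap in the odd prime-power step.
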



\section{Proof of the Main Theorem}\label{Sec: ProofMain}


This section is devoted to the proof of Theorem \ref{Thm: Main}. We will proceed through the cases in numerical order. In contrast to the statement of Theorem \ref{Thm: Main}, in this section the choice of $i$ or $j$ for an index does not indicate whether a factorization is occurring over $\FF_2$ or $\FF_4$. Likewise, we will freely conflate a polynomial in $\ZZ[x]$ with its image in $\FF_p[x]$, $\QQ_p[x]$, and extensions thereof. Throughout we readily use the correspondence between $p$-adic factors of an irreducible polynomial over a number field and prime ideal factors of $p$ in the associated extension. See Proposition II.8.2 of \cite{Neukirch}. Finally, to avoid being overly verbose, we often speak of \textit{the factorization of $\gp$ in $K$} when we mean \textit{the prime ideal factorization of the ideal generated by $\gp$ in the ring of integers of $K$}. 

First, Case \ref{MainI} is simply Dedekind--Kummer factorization. See \cite[I.\S 8. Proposition 25]{LangANT} or Proposition I.8.3 of \cite{Neukirch}.

Case \ref{MainII} is covered by Theorem 6.1 of \cite{SmithRadicalSplitting}. The proof is an application of Theorem \ref{Thm: Ore}. 

For Cases \ref{MainIII} and \ref{MainIV} we use \cite[Theorem 7]{VelezPrimePower}. 
We state the result here for convenience and to unify notation. Note that we have fixed a minor typo in V\'elez's Case D) ii) c) (Case \ref{VelezEvenIIIi} for us), and we have included the factorization of $x^{2^m}-a$ over $\ZZ_2[x]$ for reference. Further, we have the theorem for an arbitrary even $a$ as opposed to an $a$ such that $v_2(a)=2^k$. This is justified in Remark \ref{Rmk: ArbEvena}.

\begin{theorem}[Theorem 7 of \cite{VelezPrimePower}]\label{Thm: Velez2}
        Suppose $x^{2^m}-a\in \ZZ[x]$ is irreducible, and let $K=\QQ\big(\hspace{-.5ex}\sqrt[2^m]{a}\big)$. The following cases describe the prime ideal factorization of $2\Ocal_K$. If $a$ is odd, then write $w=v_2\big(a^{2^m}-a\big)=v_2(a-1)$. 
    \begin{enumerate}[label=\roman*., ref=\roman*]
        \item If $w=1$, then 2 is totally ramified: $\displaystyle 2\Ocal_K =\gp^{2^m}.$ In this case, $x^{2^m}-a$ is irreducible in $\ZZ_2[x]$. \label{VelezOddi}
        
        \item If $2\leq w\leq m+1$, then $\displaystyle 2\Ocal_K =\prod_{i=1}^{w-1}\gp_i^{2^{m-w+i}},$
        where $\gp_1$ has residue class degree 2 and $\gp_{i>1}$ has residue class degree 1. In this case, $a=\alpha^{2^{w-2}}$ for some $\alpha\in\ZZ_2$ that is not a square. The factorization of $x^{2^m}-a$ in $\ZZ_2[x]$ is 
        \[x^{2^m}-a=\left(x^{2^{m-w+2}}-\alpha\right)\left(x^{2^{m-w+2}}+\alpha\right)\left(x^{2^{m-w+3}}+\alpha^2\right)\cdots\left(x^{2^{m-1}} + \alpha^{w-3}\right),\]
        deleting terms from the right as necessary.\label{VelezOddii}
        
        \item If $w>m+1$, then $\displaystyle 2\Ocal_K =\gp_0\prod_{i=1}^{m}\gp_i^{2^{i-1}}.$ 
        Each prime has residue class degree 1. In this case, $a\in \ZZ_2^{2^m}$, so the factorization of $x^{2^m}-a$ agrees with the factorization of $x^{2^m}-1=(x-1)(x+1)(x^2+1)\cdots\big(x^{2^{m-1}}+1\big)$.\label{VelezOddiii}
    \end{enumerate}
        If $a$ is even, write $a=a_02^{h2^k}$ as in Theorem \ref{Thm: Main}. 
        When $k=0$, $x^{2^m}-a$ is 2-Eisenstein. Thus, the polynomial is irreducible and we have total ramification at 2. Assume $k>0$ and write $w_0=v_2(a_0-1)$ and $t=\max(0,w_0-2)$. By Lemma \ref{Lem: 2mthroot}, $a_0\in \ZZ_2^{2^t}$ but $a_0\notin \ZZ_2^{2^{t+1}}$. 
        \begin{enumerate}[label=\Roman*., ref=\Roman*]
            \item If $w_0=1$, then there are a number of subcases. Note $v_2(a_0+1)>1$ since $w_0=1$. \label{VelezEvenI}
                \begin{enumerate}[label=\arabic*., ref=I.\arabic*]
                    \item If $v_2(a_0+1)=2$, then $2\Ocal_K=\gp^{2^{m}}$ and $x^{2^m}-a$ is irreducible in $\ZZ_2[x]$. \label{VelezEvenIi}
                    \item If $v_2(a_0+1)\geq 3$ and $k\geq 2$, then $2\Ocal_K=\gp^{2^{m}}$ and $x^{2^m}-a$ is irreducible in $\ZZ_2[x]$.\label{VelezEvenIii}
                    \item If $v_2(a_0+1)=3$ and $k=1$, then $2\Ocal_K=\gp^{2^{m-1}}$ with $f(\gp/2)=2$ and $x^{2^m}-a$ is irreducible in $\ZZ_2[x]$. \label{VelezEvenIiii} 
                    \item If $v_2(a_0+1)\geq 4$ and $k=1$, then $2\Ocal_K=\gp_1^{2^{m-1}}\gp_2^{2^{m-1}}$. Since $v_2(a_0+1)\geq 4$, there exists $\beta\in \ZZ_2$ such that $\beta^4=-a_0$, and 
                    \[x^{2^m}-a=\big(x^{2^{m-1}}+2\beta x^{2^{m-2}}+2\beta^2\big)\big(x^{2^{m-1}}-2\beta x^{2^{m-2}}+2\beta^2\big)\] is the corresponding factorization into irreducibles in $\ZZ_2[x]$.\label{VelezEvenIiv}
                \end{enumerate}
            \item If $w_0=2$, then $2\Ocal_K=\gp^{2^{m-1}}$ with $f(\gp/2)=2$ and $x^{2^m}-a$ is irreducible in $\ZZ_2[x]$. \label{VelezEvenII} 
            \item If $w_0\geq 3$, then $\alpha\coloneqq\sqrt[2^t]{a_0}\in \ZZ_2$ with $t\geq 1$. In this case, we have the following: \label{VelezEvenIII}
                \begin{enumerate}[label=\arabic*., ref=III.\arabic*]
                    \item If $t < k$, then $\displaystyle 2\Ocal_K=\left(\gp_0\gp_1^{2}\gp_2^{4}\cdots \gp_t^{2^{t}}\right)^{2^{m-t-1}} \text{ with } f(\gp_0/2)=2 , \  f(\gp_{i>0}/2)=1.$
                    The corresponding factorization into irreducibles in $\ZZ_2[x]$ is
                    \[x^{2^m}-a=\left(x^{2^{m-t}}-2^{2^{k-t}}\alpha\right)\prod_{i=0}^{t-1}\left(x^{2^{m-t+i}}+2^{2^{k-t+i}}\alpha^{2^{i}}\right).\] \label{VelezEvenIIIi}
                    \item If $t=k$, then  $\displaystyle 2\Ocal_K=\left(\gp_0\gp_1\gp_2\gp_3^4\cdots \gp_t^{2^{t-1}}\right)^{2^{m-t}} \text{ with } f(\gp_2/2)=2 , \ f(\gp_{i\neq 2}/2)=1.$
                    The corresponding factorization into irreducibles in $\ZZ_2[x]$ is
                    \[x^{2^m}-a=\left(x^{2^{m-t}}-2\alpha\right)\prod_{i=0}^{t-1}\left(x^{2^{m-t+i}}+2^{2^{i}}\alpha^{2^{i}}\right).\] \label{VelezEvenIIIii}
                    \item If $t\geq k+1$ and $m=2$, then $k=1$ and $2\Ocal_K=\gp_0^2\gp_1^2$. Further, $x^4-a=\big(x^2-2\alpha^{2^{t-1}}\big)\big(x^2+2\alpha^{2^{t-1}}\big)$ is the corresponding factorization into irreducibles. \label{VelezEvenIIIiii}
                    \item If $t\geq k+1$ and $m\geq 3$, then $2\Ocal_K=\big(\gp_0\gp_1\gp_2\gp_2'\gp_3^4\cdots \gp_k^{2^{k-1}}\big)^{2^{m-k}}$ where each prime has residue class degree 1. The corresponding factorization into irreducibles in $\ZZ_2[x]$ is \label{VelezEvenIIIiv}
                    \begin{align*}
                    x^{2^m}-a=\left(x^{2^{m-k}}-2\alpha^{2^{t-k}}\right)&\left(x^{2^{m-k}}+2\alpha^{2^{t-k}}\right)\left(x^{2^{m-k}}-2\alpha^{2^{t-k-1}}x^{2^{m-k-1}}+2\alpha^{2^{t-k}}\right)\\
                    &\cdot\left(x^{2^{m-k}}+2\alpha^{2^{t-k-1}}x^{2^{m-k-1}}+2\alpha^{2^{t-k}}\right)\prod_{i=2}^{k-1}\left(x^{2^{m-k+i}}+2^{2^{i}}\alpha^{2^{t-k+i}}\right).
                    \end{align*}
                \end{enumerate}
        \end{enumerate}
        \end{theorem}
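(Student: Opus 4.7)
The natural approach is to apply Ore's Three Dissections (Theorem \ref{Thm: Ore}) to $f(x) = x^{2^m} - a$, treating the four umbrella cases $a$ odd, $a$ even with $k = 0$, $a$ even with $k \geq 1$ and $w_0 \leq 2$, and $a$ even with $k \geq 1$ and $w_0 \geq 3$; the small-$m$ degeneracies are handled by direct factorization over $\ZZ_2$. A key auxiliary identity is the telescoping factorization
\[
X^{2^t} - Y^{2^t} \;=\; (X - Y)(X + Y)\prod_{i=1}^{t-1}\bigl(X^{2^i} + Y^{2^i}\bigr),
\]
paired with the Vahlen--Capelli theorem (Theorem \ref{Thm: VahlenCapelli}) to certify irreducibility of each resulting factor over $\ZZ_2$. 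I may assume $h = 1$ throughout, invoking Remark \ref{Rmk: ArbEvena} to transfer back to arbitrary odd $h$.

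For $a$ odd, $\overline{f(x)} \equiv (x+1)^{2^m} \bmod 2$, so I develop $f$ with respect to $\phi(x) = x+1$ via the substitution $y = x + 1$. Using $v_2\binom{2^m}{i} = m - v_2(i)$ for $1 \leq i \leq 2^m - 1$ and the constant term $1 - a$ of valuation $w$, one reads off the principal $\phi$-polygon: a single side of slope $-1/2^m$ when $w = 1$, a sequence of sides whose slopes decrease by factors of 2 when $2 \leq w \leq m + 1$, and for $w > m + 1$, Hensel's lemma gives $a \in \ZZ_2^{2^m}$ and the factorization of $f$ coincides with that of $x^{2^m} - 1 = (x-1)(x+1)(x^2+1)\cdots(x^{2^{m-1}}+1)$. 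The residual polynomial of each side is either linear or a quadratic of the form $y^2 + y + c$ with $c$ a unit, whose irreducibility over $\FF_2$ accounts for the residue class degree two prime in (ii) and for $\gp_0$ versus $\gp_i$ ($i \geq 1$) in (iii).

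For $a$ even with $k = 0$, $f$ is $2$-Eisenstein, so $2\Ocal_K = \gp^{2^m}$. For $k \geq 1$, the cases $w_0 \in \{1,2\}$ are reduced to the odd-$a$ analysis after absorbing the 2-adic part of $a$ (yielding either full ramification as in \ref{VelezEvenIi}--\ref{VelezEvenIii} or a quadratic unramified part as in \ref{VelezEvenIiii}--\ref{VelezEvenII}). For $w_0 \geq 3$, Lemma \ref{Lem: 2mthroot} supplies $\alpha \in \ZZ_2$ with $a_0 = \alpha^{2^t}$ and $\alpha \notin \ZZ_2^2$. When $t \leq k$ one writes $a = \beta^{2^t}$ (with $\beta = \alpha \cdot 2^{2^{k-t}}$, or $\beta = 2\alpha$ when $t = k$) and applies the telescoping identity with $X = x^{2^{m-t}}$; each resulting factor is then shown to be irreducible over $\ZZ_2$ by Vahlen--Capelli, and its Newton polygon or Eisenstein structure reveals the claimed ramification index and residue class degree.

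The main obstacle is $w_0 \geq 3$ with $t \geq k + 1$: here $a$ is not a $2^t$-th power in $\ZZ_2$, so the telescoping identity only applies through level $k$, after which one encounters a factor of the form $x^{2^{m-k+1}} + 2\alpha^{2^{t-k}} x^{2^{m-k}} + \text{(lower order)}$ that further splits as the product of two ``mixed'' polynomials $x^{2^{m-k}} \pm 2\alpha^{2^{t-k-1}} x^{2^{m-k-1}} + 2\alpha^{2^{t-k}}$. This extra splitting is the source of the additional prime $\gp_2'$ in \ref{VelezEvenIIIiv}, and certifying irreducibility of each mixed factor is where the $-4\ZZ_2^4$ exception of Vahlen--Capelli becomes essential. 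The low-dimensional degeneracy \ref{VelezEvenIIIiii} with $m = 2$, $k = 1$, and $t \geq 2$ is handled separately by the explicit factorization $x^4 - a = (x^2 - 2\alpha^{2^{t-1}})(x^2 + 2\alpha^{2^{t-1}})$, with irreducibility of each quadratic verified by showing its discriminant is not a square in $\ZZ_2$.
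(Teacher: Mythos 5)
The paper does not supply a proof of this statement: it is imported verbatim as Theorem 7 of V\'elez \cite{VelezPrimePower}, with the only editorial additions being a typo correction, the explicit $\ZZ_2[x]$ factorizations, and the extension from $a=a_12^{2^k}$ to arbitrary even $a$ (justified in Remark \ref{Rmk: ArbEvena}, using Lemma \ref{Lem: 2mthroot}). So you are attempting a from-scratch reconstruction of V\'elez's proof, which is a genuinely different task from what the paper does. That said, your sketch is in the spirit of what a Newton-polygon proof must look like, and the telescoping identity $X^{2^t}-Y^{2^t}=(X-Y)(X+Y)\prod_{i=1}^{t-1}(X^{2^i}+Y^{2^i})$ together with the $-4\ZZ_2^4$ branch of Vahlen--Capelli are the right tools for cases \ref{VelezEvenIII} and \ref{VelezEvenIIIiv}.

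There is, however, a real gap in your treatment of $a$ even, $k\geq 1$, $w_0\in\{1,2\}$. You claim these ``are reduced to the odd-$a$ analysis after absorbing the 2-adic part of $a$,'' but when $1\leq k<m$ the field $\QQ\big(\hspace{-.5ex}\sqrt[2^m]{a_02^{2^k}}\big)$ is genuinely different from $\QQ\big(\hspace{-.5ex}\sqrt[2^m]{a_0}\big)$; there is no absorption available, and this is precisely why cases \ref{VelezEvenIi}--\ref{VelezEvenII} depend delicately on both $k$ and $v_2(a_0+1)$. Your parenthetical also lumps \ref{VelezEvenIiv} (which splits into two primes of residue class degree one) together with \ref{VelezEvenIiii} and \ref{VelezEvenII} (a single prime of residue class degree two) under ``a quadratic unramified part,'' which conflates inequivalent outcomes. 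Finally, for $a$ odd you assert that the residual quadratic has the form $y^2+y+c$ with $c$ a unit, but irreducibility over $\FF_2$ requires $c=1$; that computation (it is exactly the $R_S(y)=y^2+y+1$ calculation leading to Lemma \ref{Lem: 2mthroot}) needs to be carried out rather than asserted, since $y^2+y$ is not irreducible. These are fixable, but they are not yet fixed in your sketch.
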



Returning to the proof of Theorem \ref{Thm: Main}, Theorem \ref{Thm: Velez2} gives the factorization of $2$ in the ring of integers of $\QQ\big(\hspace{-.5ex}\sqrt[2^m]{a}\big)$. Recall $n=2^m n_0$ with $n_0$ odd. Our strategy is to describe the factorization of each prime of $\Ocal_{\QQ(\hspace{-.5ex}\sqrt[2^m]{a})}$ above 2 in the ring of integers of $\QQ\big(\hspace{-.5ex}\sqrt[2^m]{a},\sqrt[n_0]{a}\big)=\QQ\big(\hspace{-.5ex}\sqrt[n]{a}\big)$; this is accomplished by Lemma \ref{Lem: Split2nmida} for Case \ref{MainIII} and Lemma \ref{Lem: Split2an0} for Case \ref{MainIV}. The following diagram summarizes this:

\begin{figure}[h!]
\xymatrix{
&&&\QQ\big(\hspace{-.5ex}\sqrt[2^m]{a},\sqrt[n_0]{a}\big)=\QQ\big(\hspace{-.5ex}\sqrt[n]{a}\big)\ar@{-}[d]^{\text{Splitting given by Lemmas \ref{Lem: Split2nmida} \& \ref{Lem: Split2an0}}} & & \gp\Ocal_{\QQ(\hspace{-.5ex}\sqrt[n]{a})} =\prod\limits_{j=1}^s\gP_j^{h_j}\ar@{-}[d]\\
&&&\QQ\big(\hspace{-.5ex}\sqrt[2^m]{a}\big)\ar@{-}[d]^{\text{Splitting given by Theorem \ref{Thm: Velez2}}} & & 2\Ocal_{\QQ(\hspace{-.5ex}\sqrt[2^m]{a})}=\prod\limits_{i=1}^{r}\gp_i^{e_i}\ar@{-}[d]\\
&&&\QQ && 2\ZZ\\
}
\caption{Roadmap for the proof of Cases \ref{MainIII} and \ref{MainIV}}
\label{fig. split2}
\end{figure}

In the end, Figure \ref{fig. split2} shows 
\[2\Ocal_{\QQ(\hspace{-.5ex}\sqrt[n]{a})} = \prod\limits_{i=1}^{r}\left(\prod\limits_{j=1}^{s_i}\gP_{i,j}^{h_{i,j}}\right)^{e_i}.\]

\begin{lemma}\label{Lem: Split2nmida}
    Suppose $2\mid n$ and $2\nmid a$. 
    Let $\gp\mid 2\Ocal_{\QQ(\hspace{-.5ex}\sqrt[2^m]{a})}$ be a prime of $\Ocal_{\QQ(\hspace{-.5ex}\sqrt[2^m]{a})}$, and write $k_\gp$ 
    for the residue field of $\gp$. If 
    \begin{equation}\label{Eq: FactorsDK}
           x^{n_0}-1=\prod_{i=1}^r\gamma_i(x) 
    \end{equation}
    is a factorization into irreducibles in $k_\gp[x]$, then the factorization of $\gp$ in $\Ocal_{\QQ(\hspace{-.5ex}\sqrt[n]{a})}$ is given by
    \[\gp \Ocal_{\QQ(\hspace{-.5ex}\sqrt[n]{a})}=\prod_{i=1}^r \gP_i, \text{ where the degree of the residue field of $\gP_i$ over $k_\gp$ is } \deg \gamma_i(x).\]
\end{lemma}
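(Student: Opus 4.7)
The plan is to factor $\gp$ in $\Ocal_{\QQ(\hspace{-.5ex}\sqrt[n]{a})}$ via a direct Dedekind--Kummer argument (equivalently, the first dissection of Ore's theorem, Theorem~\ref{Thm: Ore}) applied to the tower $\QQ(\sqrt[2^m]{a}) \subset \QQ(\sqrt[n]{a})$. I would begin by setting $\gamma = \sqrt[2^m]{a}$ and $\alpha = \sqrt[n]{a}$, so that $\alpha^{n_0} = \gamma$ and $\QQ(\sqrt[n]{a}) = \QQ(\gamma)(\alpha)$. Since $[\QQ(\sqrt[n]{a}) : \QQ(\sqrt[2^m]{a})] = n_0$, the minimal polynomial of $\alpha$ over $\QQ(\gamma)$ is $f(x) := x^{n_0} - \gamma \in \Ocal_{\QQ(\hspace{-.5ex}\sqrt[2^m]{a})}[x]$.

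Next I would establish the crucial observation that $\ol{\gamma} = 1$ in $k_\gp$. Since $a$ is odd and $\gp$ lies above $2$, $\ol{a} = 1$ in $k_\gp$, and reducing $\gamma^{2^m} = a$ gives $\ol{\gamma}^{2^m} = 1$. As $k_\gp$ has characteristic $2$, the group $k_\gp^*$ has odd order, so the order of $\ol{\gamma}$ divides $\gcd(2^m, |k_\gp^*|) = 1$. Hence $\ol{\gamma} = 1$ and $\ol{f(x)} = x^{n_0} - 1$ in $k_\gp[x]$.

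To conclude, I would observe that $\ol{f(x)} = x^{n_0} - 1$ is separable (as $\gcd(n_0, 2) = 1$), and $\disc(f) = \pm n_0^{n_0} \gamma^{n_0 - 1}$ is a $\gp$-unit (both $n_0$ and $\gamma$ are units at $\gp$). By the discriminant-index relation, $[\Ocal_{\QQ(\hspace{-.5ex}\sqrt[n]{a})} : \Ocal_{\QQ(\hspace{-.5ex}\sqrt[2^m]{a})}[\alpha]]$ is coprime to $\gp$, so Dedekind--Kummer applies at $\gp$. The given factorization $\ol{f(x)} = \prod_{i=1}^r \gamma_i(x)$ in $k_\gp[x]$ then lifts to an unramified factorization $\gp\Ocal_{\QQ(\hspace{-.5ex}\sqrt[n]{a})} = \prod_{i=1}^r \gP_i$ with $[\Ocal_{\QQ(\hspace{-.5ex}\sqrt[n]{a})}/\gP_i : k_\gp] = \deg \gamma_i(x)$, as required.

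The chief subtlety is the identification $\ol{\gamma} = 1$: without this, the relevant reduced polynomial would be $x^{n_0} - \ol{\gamma}$ for a possibly nontrivial $\ol{\gamma}$, and its factorization pattern could disagree with that of $x^{n_0} - 1$ (e.g.\ a cube root of a nontrivial element of $\FF_4^\times$ can have different degree than a cube root of unity). Once this observation is secured, the remainder is routine and relies only on $n_0$ being coprime to the residue characteristic so that the relative extension is tame (in fact unramified) above $\gp$.
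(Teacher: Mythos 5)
Your proof is correct and follows essentially the same strategy as the paper's: apply Dedekind--Kummer (Ore's first dissection) to the relative extension $\QQ(\sqrt[n]{a})/\QQ(\sqrt[2^m]{a})$ at $\gp$, using that the relative discriminant is a $\gp$-unit because $n_0$ and $a$ are odd. The only difference is the choice of relative generator: you use $\alpha = \sqrt[n]{a}$, whose minimal polynomial over the intermediate field is $x^{n_0}-\gamma$ with $\gamma=\sqrt[2^m]{a}$, which forces you to prove the (correct and nicely argued) extra fact that $\ol{\gamma}=1$ in $k_\gp$; the paper instead uses $\sqrt[n_0]{a}$, whose minimal polynomial is $x^{n_0}-a$ with $a\in\ZZ$, so the reduction to $x^{n_0}-1$ is immediate from $a$ being odd, and then observes $\QQ(\sqrt[2^m]{a},\sqrt[n_0]{a})=\QQ(\sqrt[n]{a})$. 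Both routes are valid; the paper's choice avoids the $\ol{\gamma}=1$ lemma at the cost of a (trivial) change of generating element, while yours works directly with $\sqrt[n]{a}$ at the cost of that one extra reduction argument.
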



\begin{proof}
    Since $\gp\mid 2\Ocal_{\QQ(\hspace{-.5ex}\sqrt[2^m]{a})}$ and $2\nmid \Disc\big(x^{n_0}-a\big)$, we can apply the Dedekind--Kummer theorem (see \cite[I.\S 8. Proposition 25]{LangANT}) to see that the factorization of $\gp$ in the ring of integers of $\QQ\big(\hspace{-.5ex}\sqrt[2^m]{a},\sqrt[n_0]{a}\big)=\QQ\big(\hspace{-.5ex}\sqrt[n]{a}\big)$ mirrors the factorization of $x^{n_0}-a$ in $k_\gp[x]$. 
\end{proof}


Applying Lemma \ref{Lem: Split2nmida} to Cases \ref{VelezOddi}, \ref{VelezOddii}, and \ref{VelezOddiii} of Theorem \ref{Thm: Velez2}, we have Case \ref{MainIII} of Theorem \ref{Thm: Main}.
For Case \ref{MainIV}, Dedekind--Kummer factorization is not enough, and we must employ Theorem \ref{Thm: Ore}. 
Note Theorem \ref{Thm: Velez2} Cases \ref{VelezEvenI}, \ref{VelezEvenII}, and \ref{VelezEvenIII} (or Cases \ref{VelezOddi}, \ref{VelezOddii}, and \ref{VelezOddiii} if $k=v_2\big(v_2(a)\big)\geq v_2(n)=m$) describe the splitting of 2 in the ring of integers of $\QQ\big(\hspace{-.5ex}\sqrt[2^m]{a}\big)$.


\begin{lemma}\label{Lem: Split2an0}
    Suppose $a$ is even, and let $\gp\mid 2\Ocal_{\QQ(\hspace{-.5ex}\sqrt[2^m]{a})}$ be a prime of $\Ocal_{\QQ(\hspace{-.5ex}\sqrt[2^m]{a})}$. Write $k_\gp$ 
    for the residue field of $\gp$ and let $d=\gcd(v_2(a),n_0)$. If 
    \begin{equation}\label{Eq: FactorsR(y)}
        x^{d}-1=\prod_{j=1}^s\gamma_j(x) 
    \end{equation}
    is a factorization 
    into irreducibles in $k_\gp[x]$, then the factorization of $\gp$ in $\Ocal_{\QQ(\hspace{-.5ex}\sqrt[n]{a})}$ is given by
    \[\gp \Ocal_{\QQ(\hspace{-.5ex}\sqrt[n]{a})}=\prod_{j=1}^s\gP_j^{\frac{n_0}{d}}, \text{ where the degree of the residue field of $\gP_j$ over $k_\gp$ is } \deg \gamma_j(x).\]
\end{lemma}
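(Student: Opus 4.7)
The plan is to apply Ore's Theorem (Theorem \ref{Thm: Ore}) to the minimal polynomial of $\gamma = \sqrt[n]{a}$ over $K_0 = \QQ\big(\hspace{-.5ex}\sqrt[2^m]{a}\big)$. Setting $\alpha = \sqrt[2^m]{a}$ and choosing the roots compatibly so that $\gamma^{n_0} = \alpha$, the Vahlen--Capelli theorem (combined with $a$ being $n^{\text{th}}$ power free) implies that this minimal polynomial is $f(x) = x^{n_0} - \alpha \in \Ocal_{K_0}[x]$. Since $a$ is even we have $v_\gp(\alpha) > 0$, so $\ol{f(x)} = x^{n_0}$ in $k_\gp[x]$, and the Dedekind--Kummer approach used in Lemma \ref{Lem: Split2nmida} is unavailable. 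The unique irreducible factor of $\ol{f}$ is $\phi(x) = x$, which prescribes a $\phi$-adic Newton polygon.

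This polygon has a single side from $(0, v_\gp(\alpha))$ to $(n_0, 0)$. From Theorem \ref{Thm: Velez2}, the ramification index $e_{\gp/2}$ is a power of $2$; combined with $n_0$ being odd, this gives $\gcd(v_\gp(\alpha), n_0) = \gcd(v_2(a), n_0) = d$. Writing the slope in lowest terms thus yields ramification index $e = n_0/d$ and a degree-$d$ residual polynomial $R(y) = y^d + \bar c$ with $\bar c = \ol{-\alpha/\pi_\gp^{v_\gp(\alpha)}} \in k_\gp^\times$. Since $d$ divides the odd integer $n_0$, we have $d \neq 0$ in $k_\gp$ and $R$ is separable.

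The heart of the argument is to show that $R(y)$ factors in $k_\gp[y]$ with the same multiset of irreducible-factor degrees as $x^d - 1$. In characteristic $2$, $R(y) = y^d - \bar c$, so it suffices to prove $\bar c$ is a $d$-th power in $k_\gp^\times$: writing $\bar c = \bar v^d$ yields $R(y) = \prod_\zeta (y - \zeta \bar v)$, and Frobenius orbits on the roots $\zeta \bar v$ coincide with those on the $d$-th roots of unity $\zeta \in \overline{k_\gp}$, matching the factorization of $x^d - 1$ irreducible-factor by irreducible-factor. By Theorem \ref{Thm: Velez2} we have $k_\gp \in \{\FF_2, \FF_4\}$, and the claim is immediate when $k_\gp = \FF_2$ or when $\gcd(d, 3) = 1$ (so the $d$-th power map is surjective on $\FF_4^\times$).

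The main obstacle is the remaining case $k_\gp = \FF_4$ with $3 \mid d$, where I must show $\bar c = 1$ outright. Setting $\eta = \alpha/\pi_\gp^{v_\gp(\alpha)}$ and $w = 2/\pi_\gp^{e_{\gp/2}}$ (both units at $\gp$), the identity $\eta^{2^m} = a/\pi_\gp^{v_\gp(a)} = a_0 \cdot w^{v_2(a)}$ reduces modulo $\gp$ to $\bar\eta^{2^m} = \bar w^{v_2(a)}$, since $a_0$ odd forces $\bar a_0 = 1$. Because $3 \mid d \mid v_2(a)$ and $|\FF_4^\times| = 3$, the right-hand side collapses to $1$, so $\bar\eta^{2^m} = 1$; bijectivity of the Frobenius iterate on $\FF_4$ then forces $\bar\eta = 1$, whence $\bar c = 1$ in characteristic $2$. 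Feeding the factorization type of $R(y)$ into Ore's Theorem produces the prime ideal factorization $\gp\Ocal_{\QQ(\hspace{-.5ex}\sqrt[n]{a})} = \prod_{j=1}^s \gP_j^{n_0/d}$ with residue degrees matching those in the factorization of $x^d - 1$ over $k_\gp$, as required.
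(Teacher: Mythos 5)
Your proof is correct, and it takes a genuinely different route for the hard part (the $k_\gp = \FF_4$ case). You work with $x^{n_0}-\alpha$ where $\alpha = \sqrt[2^m]{a} \in \Ocal_{K_0}$ (the minimal polynomial of $\sqrt[n]{a}$ over $K_0$), whereas the paper develops $x^{n_0}-a$ with $a$ an integer; both yield the same extension and the same one-sided polygon of slope with denominator $n_0/d$. The real divergence is how each handles the residual polynomial over $\FF_4$: the paper sidesteps a direct computation by factoring the extension into an unramified step $K_0(\sqrt[d]{a})/K_0$ (Dedekind--Kummer, mirroring $y^d-1$) followed by a totally ramified step of degree $n_0/d$ (another Newton polygon), and then recombines. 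You instead compute the residual constant $\bar c$ directly and show it is a $d$-th power in $k_\gp^\times$, reducing to the genuinely nontrivial case $3 \mid d$ where you pin down $\bar c = 1$ by raising the unit $\eta = \alpha/\pi_\gp^{v_\gp(\alpha)}$ to the power $2^m$ and using $\bar a_0 = 1$, $\bar w^{v_2(a)} = 1$ (since $3\mid d\mid v_2(a)$), and bijectivity of $x\mapsto x^{2^m}$ on $\FF_4$. Your Frobenius-orbit argument showing that $y^d - \bar v^d$ and $y^d-1$ have identical factorization patterns is clean. The trade-off: the paper's two-step approach avoids unit arithmetic at the cost of an auxiliary intermediate field and a second polygon; your approach is a single application of Ore's theorem but requires the careful computation that $\bar c$ lands in the image of the $d$-th power map.
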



\begin{proof}
Let $e$ be the ramification index of $\gp$ over $2$, so $v_\gp(2)=e$. Note that Theorem \ref{Thm: Velez2} shows that $e$ is a power of 2. Letting $K_\gp$ denote the extension of $\QQ_2$ corresponding to $\gp$, we begin to apply the Montes algorithm to factor $x^{n_0}-a$ in $K_\gp[x]$ and equivalently $\gp$ in $\QQ\big(\hspace{-.5ex}\sqrt[n]{a}\big)$.

The reduction of $x^{n_0}-a$ modulo $\gp$ yields $x^{n_0}$. The $x$-adic development is $x^{n_0}-a$, and the single side of the principal $x$-polygon has slope $-\frac{ev_2(a)}{n_0}$. Since $e$ is a power of 2 and $n_0$ is odd, we see that $\gp$ has ramification index $\frac{n_0}{d}$ in $\QQ\big(\hspace{-.5ex}\sqrt[n]{a}\big)$. 
Letting $\pi_\gp$ be a uniformizer for $K_\gp$, the residual polynomial is $R(y)=y^{d}-a/\pi_\gp^{ev_2(a)}$, and Theorem \ref{Thm: Velez2} shows that the residue field of $\gp$ is either $\FF_2$ or $\FF_4$. 

In the case where $k_\gp=\FF_2$, we see $R(y)=y^{d}-1$ in $\FF_2[y]$. Since the degree is odd, $R(y)$ is separable. Thus the factorization of $\gp$ mirrors the factorization of $R(y)$, but the ramification index of each factor is multiplied by $\frac{n_0}{d}$.

In the case where $k_\gp=\FF_4$, it is not obvious that the factorization of $R(y)=y^{d}-a/\pi_\gp^{ev_2(a)}$ agrees with $x^d-1$. We approach the problem in another way. First, we consider $y^d-a$. Changing variables, we let $y\mapsto 2^{v_2(a)/d}y$, so our polynomial becomes $y^d-a/ 2^{v_2(a)}=y^d-a_0$. As the discriminant is relatively prime to 2 and $a_0\in \ZZ$, we reduce to obtain $y^d-a_0\equiv y^d-1\bmod \gp$, and Dedekind--Kummer factorization shows the splitting of $\gp$ in $\QQ\big(\hspace{-.5ex}\sqrt[2^m]{a},\sqrt[d]{a}\big)$ mirrors the factorization of $y^d-1\equiv \prod_{j=1}^s\gamma_j(x) \bmod \gp$. Note that $\gp$ is unramified in this extension. From our earlier work, we see that the ramification index of $\frac{n_0}{d}$ must be obtained in the extension to $\QQ\big(\hspace{-.5ex}\sqrt[n]{a}\big)$.

More explicitly, we can adjoin a root of $x^\frac{n_0}{d}-\sqrt[d]{a}$ to $\QQ\big(\hspace{-.5ex}\sqrt[2^m]{a},\sqrt[d]{a}\big)$ to obtain $\QQ\big(\hspace{-.5ex}\sqrt[n]{a}\big)$. The principal $x$-polygon is one-sided with slope $-\frac{ev_2(a)/d}{n_0/d}$. Thus the extension is totally ramified of degree $\frac{n_0}{d}$.
\end{proof}

We apply Lemma \ref{Lem: Split2an0} to each of the subcases of Cases \ref{VelezEvenI}, \ref{VelezEvenII}, and \ref{VelezEvenIII} of Theorem \ref{Thm: Velez2}, and we obtain Cases \ref{MainIVi}, \ref{MainIVii}, and \ref{MainIViii} of Theorem \ref{Thm: Main}. If $k=v_2\big(v_2(a)\big)\geq v_2(n)=m$, then $\QQ\big(\hspace{-.5ex}\sqrt[2^m]{a}\big)=\QQ\big((a_02^{2^kh})^\frac{1}{2^m}\big)=\QQ\big(\hspace{-.5ex}\sqrt[2^m]{a_0}\big)$, so we employ Cases \ref{VelezOddi}, \ref{VelezOddii}, and \ref{VelezOddiii} with $x^{2^m}-a_0$ to obtain the splitting of 2 in $\QQ\big(\hspace{-.5ex}\sqrt[2^m]{a}\big)$. We then apply Lemma \ref{Lem: Split2an0} to obtain Theorem \ref{Thm: Main} Case \ref{MainIV0}. This completes our proof of Theorem \ref{Thm: Main}.



\subsection{Auxiliary Lemmas and Remarks}


For Theorem \ref{Thm: Velez2}, it is important to know the largest non-negative integer $t$ such that $a_0$, the odd part of $a$, is in $\ZZ_2^{2^t}$ (equivalently, $\QQ_2^{2^t}$). Asking whether $a_0$ is a $2^t$-th power in $\ZZ_2$ is the same as asking whether $x^{2^t}-a_0$ has a root in $\ZZ_2$. We employ Theorem \ref{Thm: Ore}. Reducing $x^{2^m}-a_0$ modulo 2, we obtain $x^{2^m}-a_0\equiv \big(x-a_0\big)^{2^m}\equiv (x-1)^{2^m}\bmod 2$. Thus we take the $(x-1)$-adic development: 
\begin{equation}\label{Eq: binomexpansion2}
\begin{split}
x^{2^m}-a_0&=\left(x-1+1\right)^{2^m}-a_0\\
&=\left(\sum\limits_{i=0}^{2^m}\binom{2^m}{i}\left(x-1\right)^i 1^{2^m-i}\right) - a_0\\
&=\left(\sum\limits_{i=1}^{2^m}\binom{2^m}{i} \left(x-1\right)^i\right) + 1 - a_0.
\end{split}
\end{equation}


Let $w_0=v_2(a_0-1)$ and let $S$ denote the left-most side of the principal $(x-1)$-polygon. The side $S$ identifies whether we have a root in $\ZZ_2$. 
The initial vertex is $(0,w_0)$. If $w_0\leq m$, then $S$ has a fractional slope and does not correspond to a root. If $w_0>m+1$, then $S$ has length 1 and we have a root. If $w_0=m+1$, then $S$ has length 2 and contains the integer lattice point $(1,m)$. More analysis is needed in this case. These three cases are illustrated in Figure \ref{Fig: tandtheleftmost}. 


\begin{figure}[h!]
\begin{tikzpicture}
      \draw[<->] (0,5.5) -- (0,0) -- (4.5,0);
      \draw [fill] (0,5) circle [radius = .05];
      \draw [fill] (0,4) circle [radius = .05];
      \draw [fill] (0,3) circle [radius = .05];
      \draw [fill] (1,3) circle [radius = .05];  
      \draw [fill] (2,2) circle [radius = .05];  
      \node [right] at (.95,3.2) {$(1,m)$};
      \node [right] at (1.95,2.2) {$(2,m-1)$};
      \node [left] at (-.05,5.2) {$w_0=m+2$};
      \node [left] at (-.05,4.2) {$w_0=m+1$};
      \node [left] at (-.05,3.2) {$w_0=m$};

      \foreach \x in  {1,2,3,4} {
      		\draw (\x, 2pt) -- +(0,-4pt);
      }
      \foreach \y in  {1,2,3,4,5} {
      		\draw (2pt, \y) -- +(-4pt, 0);
      }
      
      \draw (1,3) -- (2,2);
      \draw (0,4) -- (1,3);
      \draw (0,5) -- (1,3);
      \draw (0,3) -- (2,2);
	\end{tikzpicture}
  \caption{Possibilities for the left-most side as $w_0$ varies}
  \label{Fig: tandtheleftmost}
\end{figure}


If $w_0=m+1$, then $(2,m-1)$ is the terminal vertex of $S$. We analyze the residual polynomial in $\FF_2[y]$,
\begin{align*}
    R_S(y)&=\frac{1-a_0}{2^{w_0}}+\frac{2^m}{2^m}y+\frac{\binom{2^m}{2}}{2^{m-1}}y^2\\
    &=y^2+y+1.
\end{align*}

We see that the factor of $x^{2^m}-a_0$ corresponding to the side $S$ yields a degree 2 extension of $\FF_2$ and not a root in $\ZZ_2$. Thus $x^{2^m}-a_0$ does not have any roots in $\ZZ_2$ and $a_0\notin \ZZ_2^{2^m}$. (Equivalently, $a_0\notin \QQ_2^{2^m}$.) We summarize with the following lemma.

\begin{lemma}\label{Lem: 2mthroot}
Let $a_0\in \ZZ$ be odd and write $w_0=v_2\big(a_0^{2^m}-a_0\big)=v_2\big(a_0^{2}-a_0\big)=v_2\big(a_0-1\big)$. If $w_0=1,2$, then $a_0\notin \ZZ_2^2$. Otherwise, $w_0>2$ and $a_0\in \ZZ_2^{2^{w_0-2}}$ but $a_0\notin \ZZ_2^{2^{w_0-1}}$. In other words, if $t=\max(0,w_0-2)$, then $a_0\in\ZZ_2^{2^t}$ but $a_0\not\in\ZZ_2^{2^{t+1}}.$
\end{lemma}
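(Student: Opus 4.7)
The plan is to reformulate the lemma as the equivalence: for $t \geq 1$, $a_0 \in \ZZ_2^{2^t}$ if and only if $w_0 \geq t + 2$ (the case $t = 0$ being trivial since $a_0 \in \ZZ_2$). Granting this equivalence, the largest $t$ with $a_0 \in \ZZ_2^{2^t}$ is exactly $\max(0, w_0 - 2)$, which is the content of the lemma. I would then establish the two implications separately, using very different tools for each.

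For the necessity direction ($a_0 = b^{2^t}$ with odd $b \in \ZZ_2$ forces $w_0 \geq t + 2$), I would use the telescoping factorization
\[
a_0 - 1 = b^{2^t} - 1 = (b - 1)(b + 1)\prod_{k=1}^{t-1}\bigl(b^{2^k} + 1\bigr).
\]
For any odd $b$, exactly one of $b \pm 1$ is divisible by $4$ while the other is divisible by exactly $2$, so $v_2(b^2 - 1) = v_2(b - 1) + v_2(b + 1) \geq 3$. For $k \geq 1$, squaring gives $b^{2^k} \equiv 1 \pmod 8$, hence $v_2\bigl(b^{2^k} + 1\bigr) = 1$. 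Summing the contributions yields $w_0 \geq 3 + (t - 1) = t + 2$.

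For the sufficiency direction ($w_0 \geq t + 2$ forces $a_0 \in \ZZ_2^{2^t}$), I would repeat the $(x-1)$-adic Newton polygon analysis of the paragraph preceding the lemma with $x^{2^t} - a_0$ in place of $x^{2^m} - a_0$. The identity $\binom{2^t}{i} = \tfrac{2^t}{i}\binom{2^t - 1}{i - 1}$ with $\binom{2^t - 1}{i - 1}$ odd gives $v_2\binom{2^t}{i} = t - v_2(i)$, so the lattice points of interest are $(0, w_0)$ and $(2^j, t - j)$ for $0 \leq j \leq t$. Under $w_0 \geq t + 2$, i.e., $w_0 - t > 1$, a direct slope comparison ($-(w_0 - t)$ versus $-(w_0 - t + 1)/2$) confirms that the leftmost side of the principal polygon runs from $(0, w_0)$ to $(1, t)$ alone, with integer slope $-(w_0 - t) \leq -2$ and residual polynomial $y + 1 \in \FF_2[y]$; by Theorem \ref{Thm: Ore}, the root $y = 1$ lifts to a root of $x^{2^t} - a_0$ in $\ZZ_2$. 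The main obstacle is the bookkeeping verifying that the leftmost side has length exactly $1$, which as just noted reduces to the strict inequality $w_0 - t > 1$ supplied by the hypothesis.
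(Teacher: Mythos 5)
Your proposal is correct. The sufficiency direction ($w_0\geq t+2$ forces $a_0\in\ZZ_2^{2^t}$) follows the paper's $(x-1)$-adic Newton polygon argument essentially verbatim, specialized to the regime where the leftmost side has length one and integer slope at most $-2$. Where you genuinely depart from the paper is in the necessity direction: to show $a_0\notin\ZZ_2^{2^{w_0-1}}$, the paper runs the Newton polygon for $x^{2^{w_0-1}}-a_0$, which lands in the boundary case where the leftmost side has length two, so it must compute the residual polynomial $y^2+y+1$ and observe its irreducibility over $\FF_2$. You instead take a hypothetical $b\in\ZZ_2$ with $b^{2^t}=a_0$ and count $2$-adic valuations in the telescoping factorization $b^{2^t}-1=(b-1)(b+1)\prod_{k=1}^{t-1}\bigl(b^{2^k}+1\bigr)$, using only that the square of an odd $2$-adic unit is $\equiv 1\pmod 8$. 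This yields the exact identity $w_0=v_2(b^2-1)+(t-1)$, from which necessity is immediate; it is more elementary and dispenses with the residual-polynomial computation entirely. The trade-off is that you need two distinct tools, one per implication, where the paper's treatment is uniform via the Newton polygon, but the boundary-case analysis you thereby avoid arguably makes your version of the necessity step cleaner.
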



\begin{remark}\label{Rmk: ArbEvena}
V\'elez states Theorem \ref{Thm: Velez2} for $a=a_12^{2^k}$ with $a_1$ odd; however, our statement is equivalent. Indeed, suppose $a$ is even and $2\mid\gcd\big(v_2(a),n\big)$, so we can write $a$ as $a=a_02^{h2^k}$ with $h$ and $a_0$ odd. Using B\'ezout's identity, we can find $u,v\in\ZZ$ such that $hu+2^mv=1$. Here $u$ is odd, so $\QQ\big(\hspace{-.5ex}\sqrt[2^m]{a}\big)=\QQ\big(\hspace{-.5ex}\sqrt[2^m]{a^u}\big)$. We find that 
\[a^\frac{u}{2^m}=a_0^\frac{u}{2^m}2^\frac{hu2^k}{2^m}=a_0^\frac{u}{2^m}2^\frac{(1-2^mv)2^k}{2^m}=\frac{a_0^\frac{u}{2^m}2^\frac{2^{k}}{2^m}}{2^v}.\]
Thus, letting $a_1=a_0^u$, it suffices to consider $\QQ\big(\hspace{-.5ex}\sqrt[2^m]{a_12^{2^k}}\big)$. 

We are interested in the largest $t$ such that $a_12^{2^k}\in \ZZ_2^{2^t}$ and $a_12^{2^k}\notin \ZZ_2^{2^{t+1}}$. We claim this is the same as the largest $t$ such that $a\in \ZZ_2^{2^t}$ and $a\notin \ZZ_2^{2^{t+1}}$. 

To see this, it suffices to show that the relevant $t$-values for $a_0$ and $a_1$ agree. Notice that if $u$ is odd and $a_0=A^{2^t}$ for some $A$ that is not a square, then $a_0^u=\big(A^{2^t}\big)^u=\big(A^u\big)^{2^t}$. Thus $a_0^u\in \ZZ_2^{2^t}$. Conversely, if $a_0^u=B^{2^t}$, then $a_0=B^{2^t}/a_0^{u-1}$. Since $u-1$ is even, $a_0$ is a square in $\QQ_2$ and hence in $\ZZ_2$. We have $\sqrt{a_0}^{u}=B^{2^{t-1}}$. If $t>1$, then we have $\sqrt{a_0}=B^{2^{t-1}}/\sqrt{a_0}^{u-1}$, so that $\sqrt{a_0}$ is a square in $\QQ_2$ and hence $\ZZ_2$. Continuing in this manner, we see that $a_0$ is a $2^t$-th power. Thus $a_0\in \ZZ_2^{2^t}\iff a_1\in \ZZ_2^{2^t}$, and 
\[w_0=v_2(a_0-1)=v_2(a_1-1).\]
\end{remark}


\section{Common $N$-Index Divisors}\label{Sec: NCIDs}

With this section we will prove Corollaries \ref{Cor: 2NCID} and \ref{Cor: oddNCIDs} and then construct a number of examples. For our proof, we will combine the main theorem of \cite{Pleasants} with Theorems \ref{Thm: Main} and \ref{Thm: MainOddp}. 

Recall that a common $N$-index divisor (C$N$-ID) is a prime that divides the index of any order with $N$ ring generators in the maximal order. Also, recall that the number of monic irreducible polynomials of degree $f$ over $\FF_{q}$ is
\[\Irr(f,q) = \frac{1}{f}\sum_{d\mid f}\mu\left(\frac{f}{d}\right)q^d.\]
Finally, given an integral prime $p$ and a number field $K/\QQ$, let $\lambda_f$ be the number of primes of residue class degree $f$ above $p$ in $\Ocal_K$. 
Pleasants's main theorem in the case where the base field is $\QQ$ follows:

\begin{theorem}[Theorem 3 of \cite{Pleasants}]\label{Thm: PleasantsMain}
    The prime $p$ is a C$N$-ID of $K/\QQ$ if and only if $\lambda_f>\Irr\big(f,p^N\big)$ for some positive integer $f$.
\end{theorem}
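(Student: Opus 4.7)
The plan is to translate the index-divisibility condition into a combinatorial question about $\FF_p$-algebra homomorphisms into $\Ocal_K/p\Ocal_K$. First I would observe that, for $\theta_1,\ldots,\theta_N \in \Ocal_K$ with $\QQ(\theta_1,\ldots,\theta_N)=K$, one has $p \nmid [\Ocal_K : \ZZ[\theta_1,\ldots,\theta_N]]$ if and only if the natural $\FF_p$-algebra map $\FF_p[x_1,\ldots,x_N] \to \Ocal_K/p\Ocal_K$ sending $x_j \mapsto \theta_j \bmod p$ is surjective, since both sides are finite $\FF_p$-algebras whose $\FF_p$-dimensions differ by exactly the $p$-adic valuation of the index. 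When $\QQ(\theta_1,\ldots,\theta_N) \neq K$, the index is infinite and thus divisible by $p$ for free, so the problem reduces to asking whether some $N$-tuple in $\Ocal_K$ produces such a surjection.

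Next I would apply the Chinese Remainder Theorem to write $\Ocal_K/p\Ocal_K \cong \prod_i R_i$ where $R_i = \Ocal_K/\gp_i^{e_i}$ is a local Artinian $\FF_p$-algebra whose residue field $k_i$ has degree $f_i$ over $\FF_p$. A map into this product is surjective if and only if (i) each component $\FF_p[x_1,\ldots,x_N] \to R_i$ is surjective, and (ii) the associated maximal ideals $\gM_i$ of $\FF_p[x_1,\ldots,x_N]$ (obtained by composing with $R_i \twoheadrightarrow k_i$) are pairwise distinct. Condition (ii) is the pigeonhole obstruction: a maximal ideal of $\FF_p[x_1,\ldots,x_N]$ with residue field $\FF_{p^f}$ corresponds to a $\Gal(\overline{\FF_p}/\FF_p)$-orbit of $N$-tuples in $\FF_{p^f}^N$ whose components generate $\FF_{p^f}$ over $\FF_p$. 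By inclusion--exclusion over intermediate subfields, the number of such generating tuples in $\FF_{p^f}^N$ is $\sum_{d \mid f}\mu(f/d)(p^N)^d$, and because each orbit has size exactly $f$, the number of orbits is precisely $\Irr(f,p^N)$. Hence an assignment of the primes above $p$ to distinct orbits exists precisely when $\lambda_f \leq \Irr(f,p^N)$ for every $f$.

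To close the equivalence, assume $\lambda_f \leq \Irr(f,p^N)$ for all $f$ and construct the required $\theta_j$'s by selecting representatives of the chosen Galois orbits at each $\gp_i$ and invoking CRT to lift these compatible residue-data to global elements of $\Ocal_K$. Condition (i) additionally demands generating the nilpotent part of each non-reduced $R_i$, which one arranges by perturbing one of the $\theta_j$'s so that its image in $R_i$ contributes a uniformizer; this uses the Cohen-type structure $R_i \cong k_i[\pi]/(\pi^{e_i})$ as $\FF_p$-algebras (valid since $k_i/\FF_p$ is separable and the local ring $(\Ocal_K)_{\gp_i}$ is a DVR) together with a Nakayama argument. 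Conversely, if $\lambda_f > \Irr(f,p^N)$ for some $f$, pigeonhole forces two distinct primes of residue degree $f$ to share a Galois orbit for every choice of $\theta_j$'s, collapsing the corresponding maximal ideals and obstructing surjectivity; hence $p \mid [\Ocal_K : \ZZ[\theta_1,\ldots,\theta_N]]$ for all such tuples.

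The principal obstacle is verifying condition (i) in the presence of ramification: the residue-field bookkeeping must be promoted to the full local Artinian rings $R_i$. Concretely, given any $N$-tuple in $k_i^N$ generating $k_i$ over $\FF_p$, one must show there exist lifts to $R_i^N$ that generate all of $R_i$ as an $\FF_p$-algebra. This requires using both the CRT-freedom across different primes and the freedom to choose representatives within each orbit to ensure some $\FF_p$-polynomial combination of the lifts is a uniformizer of $R_i$; this step is the genuinely number-theoretic content underlying the otherwise purely combinatorial bound $\Irr(f,p^N)$.
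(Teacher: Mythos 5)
The paper does not prove this statement; it is quoted from Theorem 3 of Pleasants and used as a black box, so there is nothing in the paper for your argument to be compared against. Your reconstruction appears correct and is the natural CRT-plus-orbit-counting argument one would expect: maximal ideals of $\FF_p[x_1,\ldots,x_N]$ with residue field $\FF_{p^f}$ are in bijection with Galois orbits of generating $N$-tuples, of which there are exactly $\Irr\big(f,p^N\big)$, and the pigeonhole then gives the obstruction. The lifting step you flag is indeed the crux and does go through: in $R_i \cong k_i[\pi]/(\pi^{e_i})$, take Teichm\"uller lifts for coordinates $2,\ldots,N$ and set the first coordinate to $\bar a_1 + \pi$; if $f$ is the minimal polynomial of $\bar a_1$ over the subfield generated by $\bar a_2,\ldots,\bar a_N$, then $f(\bar a_1 + \pi) = f'(\bar a_1)\pi + O(\pi^2)$ has valuation exactly one by separability of $k_i/\FF_p$, and any $\FF_p$-subalgebra of $k_i[\pi]/(\pi^{e_i})$ that surjects onto $k_i$ and contains a uniformizer is the whole ring (induct on $e_i$). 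One small wrinkle in your opening step: the $\FF_p$-codimension of the image of $\ZZ[\theta_1,\ldots,\theta_N]$ in $\Ocal_K/p\Ocal_K$ equals the number of nontrivial $p$-power cyclic summands of $M \coloneqq \Ocal_K/\ZZ[\theta_1,\ldots,\theta_N]$, not $v_p$ of the index. The equivalence you want still holds, of course, since surjectivity is $M/pM = 0$, which is $p \nmid |M|$ by Nakayama after localizing at $p$; but the dimension bookkeeping as stated is off.
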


Now we proceed with the proof of Corollary \ref{Cor: 2NCID}.

\begin{proof}[Proof of Corollary \ref{Cor: 2NCID}.]
    First, it is useful to note that ramification indices are not relevant in Theorem \ref{Thm: PleasantsMain}; we need only focus on the number of primes of a given residue class degree. Now, Cases \hyperref[CorNCIDsIandII]{I \& II} are established by noting that, aside from ramification indices, the splitting of $2\Ocal_{\QQ(\hspace{-.5ex}\sqrt[n]{a})}$ mirrors the splitting of a polynomial into irreducibles in $\FF_2[x]$. In other words, $\lambda_f\leq \Irr\big(f,2\big)$ since $\lambda_f$ is equal to the number of irreducibles of degree $f$ in the factorization of a separable polynomial in $\FF_2[x]$.

    Moving to Case \ref{CorNCIDsIII}, we see that the same logic shows that 2 is not a C$N$-ID when $w=1$ since the splitting of $2\Ocal_{\QQ(\hspace{-.5ex}\sqrt[n]{a})}$ mirrors that of $x^{n_0}-1$ in $\FF_2[x]$. Thus we have Case \ref{CorNCIDsIIIi}. For Case \ref{CorNCIDsIIIii}, we take an $f>0$ and note there are $D_f+(w-2)d_f$ primes of degree $f$ above $2$ in $\Ocal_{\QQ(\hspace{-.5ex}\sqrt[n]{a})}$. Theorem \ref{Thm: PleasantsMain} finishes the argument. Likewise, in Case \ref{CorNCIDsIIIiii} we have $(m+1)d_f$ primes of degree $f$ above $2$, so Theorem \ref{Thm: PleasantsMain} again completes the argument. Since splitting and inertia are the same in Theorem \ref{Thm: Main} Case \ref{MainIV0} but with $x^{\gcd(h,n_0)}-1$ replacing $x^{n_0}-1$ and $w_0$ replacing $w$, the argument above also yields Case \ref{CorNCIDsIV0} of Corollary \ref{Cor: 2NCID}.

    We proceed with Case \ref{CorNCIDsIVi}. In Case \ref{MainIVi1} of Theorem \ref{Thm: Main} we see 2 is not a C$N$-ID since $\lambda_f$ is equal to the number of irreducibles of degree $f$ in the factorization of $x^{\gcd(h,n_0)}-1$ in $\FF_2[x]$. Thus, in order for 2 to be a C$N$-ID, it is necessary that $v_2(a_0+1)\geq 3$ and $k=1$. When this occurs, the number of primes of degree $f$ above $2$ is $D_f$ if $v_2(a_0+1)= 3$ and $2d_f$ if $v_2(a_0+1)\geq 4$. Thus, with our necessary hypotheses, 2 is a C$N$-ID if and only if $v_2(a_0+1)= 3$ and $D_f>\Irr\big(f,2^N\big)$ or $v_2(a_0+1)\geq 4$ and $2d_f>\Irr\big(f,2^N\big)$. 

    We now want to argue that in Case \ref{CorNCIDsIVi}, we must have $N=1$. When $v_2(a_0+1)= 3$, the factorization is the same as in Case \ref{CorNCIDsIVii}, cf. Theorem \ref{Thm: Main} Cases \ref{MainIVi2} and \ref{MainIVii}, and we will make the requisite argument when we address Case \ref{CorNCIDsIVii} below. Thus, we suppose $v_2(a_0+1)\geq 4$ and $2d_f>\Irr\big(f,2^N\big)$. For a contradiction, assume that $N>1$. We see that in this situation $f>2$. Indeed, for $f=1$, we see that $x-1$ is the only degree 1 factor of $x^{\gcd(h,n_0)}-1$, and $2\not>\Irr\big(1,2^N\big)$ for any $N>0$. For $f=2$, there is only one irreducible of degree 2 in $\FF_2[x]$, so the following string of inequalities shows irreducibles of degree 2 will not cause $2$ to be a C$N$-ID:
    \[2d_f\leq 2\leq 2^{N-1}\leq 2^{N-1}\left( 2^N-1\right) =\frac12\left(2^{2N}-2^N\right)=\Irr\big(f,2^N\big).\]

    We continue with the assumptions that $f>2$ and $N>1$ in search of a contradiction. Following Pleasants \cite[Equation (2)]{Pleasants}, we will employ the following bounds on $\Irr\big(f,q\big)$:
    \begin{equation}\label{Eq: Pleasants(2)}
        \frac{q^{f-1}(q-1)}{f}\leq \Irr\big(f,q\big) \leq \frac{q^f}{f}.
    \end{equation}
    These are established via an analysis of Gauss's formula. If $2$ is a C$N$-ID, we have $2d_f>2^{Nf-N}\big(2^N-1\big)/f$. Dividing by $2$ and using the fact that $d_f\leq 2^f/f$, we have $2^f/f>2^{Nf-N-1}\big(2^N-1\big)/f$. Thus $2^f>2^{Nf-N-1}\big(2^N-1\big)$. We have a contradiction if we can show that $f\leq Nf-N-1$ or equivalently $N\geq \frac{f+1}{f-1}$. This holds, however, since $f>2$ and $N>1$. Thus we conclude that in Case \ref{CorNCIDsIVi} it is necessary for $N=1$ in order for $2$ to be a C$N$-ID.

    For Case \ref{CorNCIDsIVii} and the subcase of \ref{CorNCIDsIVi} with $v_2(a_0+1)=3$, we see $\lambda_f$ is equal to the number of irreducible factors of degree $\frac{f}{2}$ in the factorization of $x^{\gcd(h,n_0)}-1$ over $\FF_4[x]$. Here $f$ is necessarily even and $2$ is a C$N$-ID if and only if $D_f>\Irr\big(f,2^N\big)$. Note that though the splitting of $2$ mirrors the splitting of a separable polynomial, common index divisors are a possibility here since the splitting of $x^{\gcd(h,n_0)}-1$ is over $\FF_4[x]$ as opposed to $\FF_2[x]$. For example, if $\gcd(h,n_0)=3$, then 2 is a common 1-index divisor. It remains to show that $N=1$ is the only possibility in this case. For a contradiction, we suppose $N\geq 2$. We will employ \eqref{Eq: Pleasants(2)}. First, we contend with the $f=2$ case. Since $x^{\gcd(h,n_0)}-1$ can have at most three linear factors over $\FF_4[x]$, it suffices to note that $\Irr\big(2,2^N\big)\geq 2^{2N-N}\big(2^N-1\big)/2\geq 3$. Now suppose $f>2$. Note that $\lambda_f\leq 2^{f+1}/f$. 
    Likewise, $\Irr\big(f,2^N\big)\geq 2^{Nf-N}\big(2^N-1\big)/f$. 
    We have $2^{f+1}/f>2^{Nf-N}\big(2^N-1\big)/f$. For a contradiction it suffices to show $f+1\leq Nf-N$. As $f>2$, we have $\frac{f+1}{f-1}\leq 2\leq N$. This establishes that in this case, if 2 is a C$N$-ID, then $N=1$.
    
    We continue on to Cases \ref{CorNCIDsIViii1new} and \ref{CorNCIDsIViii3new}. Theorem \ref{Thm: Main} Cases \ref{MainIViii1} and \ref{MainIViiinew3} show there are $D_f+(w_0-2)d_f$ primes above 2 of degree $f$. Applying Theorem \ref{Thm: PleasantsMain}, we have the result.

    For Case \ref{CorNCIDsIViii2new}, we see that Theorems \ref{Thm: Main} and \ref{Thm: PleasantsMain} show that 2 is a C$N$-ID if and only if $2d_f>\Irr\big(f,2^N\big)$. Note that $f>1$ since $x$ is not a factor of $x^{\gcd(h,n_0)}-1$ and $2\leq 2^N$. Since $f>1$, the argument employed for Case \ref{CorNCIDsIVi} shows that $N=1$. Thus we have our result in this case.

    Finally, for Case \ref{CorNCIDsIViii4}, Theorem \ref{Thm: Main} shows the number of primes above 2 of residue class degree $f$ is $(k+2)d_f$. Applying Theorem \ref{Thm: PleasantsMain} finishes our proof.
\end{proof}


We now undertake the proof of Corollary \ref{Cor: oddNCIDs}. 

\begin{proof}[Proof of Corollary \ref{Cor: oddNCIDs}.]
    First we note that the case when $N=1$ is Corollary 1.8 of \cite{SmithRadicalSplitting}. Thus, Cases \hyperref[CorOddNCIDsIandII]{I \& II} are covered by \cite[Corollary 1.8]{SmithRadicalSplitting} since a C$N$-ID with $N>1$ is necessarily a common 1-index divisor.

    Moving to Case \ref{CorOddNCIDsIII}, Theorem \ref{Thm: MainOddp} shows we have $\min(w,m+1)\cdot d_f$ primes of degree $f$ above $p$. With this, Theorem \ref{Thm: PleasantsMain} completes the argument.

    Finally, in Case \ref{CorOddNCIDsIV}, Theorem \ref{Thm: MainOddp} shows we have $\mathfrak{d}_{f}+\min(w_0-1,k,m)\cdot d_f$ primes of degree $f$ above $p$. Again, Theorem \ref{Thm: PleasantsMain} yields our result.
\end{proof}


\begin{example}\label{Ex: Using2togetNgenerators}
    Fix an integer $N>1$, and suppose we wish to construct a radical extension needing at least $N+1$ ring generators. In other words, we want to construct an extension with a common $N$-index divisor. Theorems \ref{Thm: Main} and \ref{Thm: MainOddp} give us a number of ways to obtain such a construction. For a simple example, we will use Case \ref{MainIIIiii} of Theorem \ref{Thm: Main}.

    We take $n=2^{2^{N}}$ and $a=2^{2^{N}+2}+1$. The Vahlen-Capelli theorem shows $x^n-a$ is irreducible. We see $w=2^N+2$, so $w>m+1=2^N+1$. Thus 
    \[2\Ocal_{\QQ(\hspace{-.5ex}\sqrt[n]{a})}=\prod_{i=1}^{m+1}\gp_{i}^{2^{\max(i-2,0)}} =\gp_1\gp_2\prod_{i=3}^{2^N+1}\gp_{i}^{2^{i-2}}.\]
    We see that $2$ splits into $2^N+1$ degree 1 primes. However, there are only $2^N$ degree 1 polynomials in $\FF_{2^N}[x]$. Thus 2 is a common $N$-index divisor, and $\Ocal_{\QQ(\hspace{-.5ex}\sqrt[n]{a})}$ requires at least $N+1$ ring generators. In fact, it requires exactly $N+1$ ring generators since for a prime to be a common 1-index divisor in a radical extension, it is necessary that the prime divide the degree of the extension. See Corollary 1.6 of \cite{SmithRadicalSplitting}. 
\end{example}

Notice that in Example \ref{Ex: Using2togetNgenerators} the degree is quite large. From Theorem \ref{Thm: PleasantsMain}, the minimum degree required for $p$ to be a common $N$-index divisor is $p^N+1$. We can achieve this via the following construction.

\begin{proposition}\label{Prop: NonRadCommonN}
    Fix a prime $p$ and a positive integer $N$. Letting $\ell$ be any prime not equal to $p$, the extension given by
    \[f(x)=x^{p^N+1}+\ell p x^{p^N}+\ell p^3x^{p^N-1}+\cdots + \ell p^{\frac{i(i+1)}{2}}x^{p^N-i+1}+\cdots \ell p^{p^N(p^N+1)/2}x+\ell p^{(p^N+1)(p^N+2)/2}\] has $p$ as a common $N$-index divisor. In other words, the ring of integers of $\QQ[x]/\langle f(x) \rangle$ requires at least $N+1$ ring generators.
\end{proposition}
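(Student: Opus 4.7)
The plan is to combine three ingredients: Eisenstein's criterion at $\ell$ to verify irreducibility, Ore's dissection (Theorem~\ref{Thm: Ore}) applied to a transparent Newton polygon at $p$, and Pleasants's criterion (Theorem~\ref{Thm: PleasantsMain}) to convert the resulting splitting data into the conclusion that $p$ is a common $N$-index divisor.

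First I would establish irreducibility. Every non-leading coefficient of $f(x)$ has the form $\ell p^{i(i+1)/2}$, so its $\ell$-adic valuation is exactly $1$ (since $\ell\neq p$); the leading coefficient is $1$ and the constant term $\ell p^{(p^N+1)(p^N+2)/2}$ is not divisible by $\ell^2$. Thus Eisenstein's criterion at $\ell$ applies and $f(x)$ is irreducible over $\mathbb{Q}$. Let $K=\mathbb{Q}[x]/\langle f(x)\rangle$, a number field of degree $p^N+1$.

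Next I would factor $p\mathcal{O}_K$ using Ore's theorem. Since $\overline{f(x)}\equiv x^{p^N+1}\pmod{p}$, the only irreducible factor of $\overline{f(x)}$ is $\phi(x)=x$, and the $\phi$-adic development is $f(x)=\sum_{k=0}^{p^N+1}a_k x^k$. Write $k=p^N+1-i$; then $v_p(a_{p^N+1})=0$ and $v_p(a_{p^N+1-i})=i(i+1)/2$ for $1\leq i\leq p^N+1$. A direct computation gives the slope between consecutive points $(p^N+1-i,\,i(i+1)/2)$ and $(p^N+2-i,\,(i-1)i/2)$:
\[
\frac{(i-1)i/2-i(i+1)/2}{1}=-i.
\]
Thus, as $i$ runs from $1$ to $p^N+1$, the slopes traverse the strictly increasing sequence $-(p^N+1),-p^N,\dots,-2,-1$. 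Every intermediate lattice point is therefore a vertex of the principal $x$-polygon, which has $p^N+1$ sides, each of length $1$ and slope $-i/1$ in lowest terms. Hence for each side $e=1$, and the residual polynomial has degree $l/e=1$, which is automatically separable. By Theorem~\ref{Thm: Ore}, each side contributes exactly one prime of ramification index $1$ and residue class degree $1$, yielding
\[
p\mathcal{O}_K=\prod_{i=1}^{p^N+1}\mathfrak{p}_i,\qquad [\mathcal{O}_K/\mathfrak{p}_i:\mathbb{F}_p]=1.
\]
(As a sanity check, $\sum e_if_i=p^N+1=\deg f$.)

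Finally I would invoke Theorem~\ref{Thm: PleasantsMain} with residue degree $1$: the number of primes above $p$ of residue class degree $1$ is $\lambda_1=p^N+1$, while the number of monic linear polynomials over $\mathbb{F}_{p^N}$ is $\Irr(1,p^N)=p^N$. Since $\lambda_1>\Irr(1,p^N)$, the prime $p$ is a common $N$-index divisor, so $\mathcal{O}_K$ cannot be generated over $\mathbb{Z}$ by fewer than $N+1$ elements. The main potential obstacle is ensuring that the Newton polygon's slopes really are all distinct integers (so that no further Montes dissection is needed), but the coefficients $\ell p^{i(i+1)/2}$ are engineered precisely so that the second-difference pattern forces the slope between consecutive points to change by $1$ at each step, which is what makes the entire argument go through in one pass.
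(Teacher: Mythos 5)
Your proof is correct and follows essentially the same approach as the paper's (much terser) argument: Eisenstein at $\ell$ for irreducibility, then Ore's theorem applied to the $x$-adic Newton polygon at $p$, which by design has $p^N+1$ distinct integer slopes yielding $p^N+1$ degree-one primes, and finally Pleasants's criterion since $p^N+1>\Irr(1,p^N)=p^N$. You simply spell out the slope computation and the appeal to Theorem~\ref{Thm: PleasantsMain} that the paper leaves implicit.
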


\begin{proof}
    Since the polynomial is $\ell$-Eisenstein, it is irreducible. We apply Theorem \ref{Thm: Ore}. The principal $x$-polygon has $p^N+1$ sides, each with length 1. From left to right, the slopes descend from $p^N+1$ to 1. Since there are only $p^N$ distinct monic linear polynomials in $\FF_{p^N}[x]$, Pleasants (Theorem \ref{Thm: PleasantsMain}) tells us that at least $N+1$ ring generators are required.
\end{proof}


\begin{example}\label{Ex. FourGens}
    To illustrate Proposition \ref{Prop: NonRadCommonN}, consider \begin{equation}\label{Eq. fdeg9}
        f(x)=x^9+6x^8+24x^7+2^6\cdot 3x^6 + 2^{10}\cdot3x^5 + 2^{15}\cdot3x^4 + 2^{21}\cdot3x^3 + 2^{28}\cdot3x^2 + 2^{36}\cdot3x + 2^{45}\cdot 3.\end{equation}
    The polynomial is 3-Eisenstein, hence irreducible. We let $K$ be the number field generated by a root, and we write $\Ocal_K$ for the ring of integers. We begin to apply Theorem \ref{Thm: Ore}. Reducing at 2, we take the $x$-adic development which is simply \eqref{Eq. fdeg9}. The principal $x$-polygon is given in Figure \ref{Fig. fdeg9}. It has 9 sides, each with length 1, so $2\Ocal_K$ factors into 9 distinct degree 1 primes. As there are only 8 distinct irreducibles of degree 1 in $\FF_{2^3}[x]$, we see that 2 is a common $3$-index divisor. In other words, $\Ocal_K$ cannot be written $\ZZ[\alpha_1, \alpha_2, \alpha_3]$ for algebraic integers $\alpha_1, \alpha_2,\alpha_3$. Notice that this example is minimal in the sense that for number fields of degree 8 and smaller common 3-index divisors do not occur.    
\end{example}

\begin{figure}[h!]
\begin{tikzpicture}
      \draw[<->] (0,10.5) -- (0,0) -- (9.5,0);
      
      \draw [fill] (0,45*2/9) circle [radius = .05];
	   \draw [fill] (1,36*2/9) circle [radius = .05]; 
      \draw [fill] (2,28*2/9) circle [radius = .05];
      \draw [fill] (3,21*2/9) circle [radius = .05];
      \draw [fill] (4,15*2/9) circle [radius = .05]; 
      \draw [fill] (5,10*2/9) circle [radius = .05];
      \draw [fill] (6,6*2/9) circle [radius = .05];
      \draw [fill] (7,3*2/9) circle [radius = .05]; 
      \draw [fill] (8,1*2/9) circle [radius = .05];
      \draw [fill] (9,0) circle [radius = .05];
      \foreach \x in  {1,2,3,4,5,6,7,8,9} {
      		\draw (\x, 2pt) -- +(0,-4pt);
            \node [below] at (\x, 0) {\x};
      }
      \foreach \y in  {5*2/9,10*2/9,15*2/9,20*2/9, 25*2/9, 30*2/9, 35*2/9, 40*2/9, 10} {
      		\draw (2pt, \y) -- +(-4pt, 0);
      }
      \node [left] at (0,5*2/9) {5};    
      \node [left] at (0,10*2/9) {10};
      \node [left] at (0,15*2/9) {15};    
      \node [left] at (0,20*2/9) {20};
      \node [left] at (0,25*2/9) {25};    
      \node [left] at (0,30*2/9) {30};
      \node [left] at (0,35*2/9) {35};    
      \node [left] at (0,40*2/9) {40};
      \node [left] at (0,45*2/9) {45};
      
      \draw (0,10) -- (1,36*1*2/9);
      \draw (1,36*2/9) -- (2,28*1*2/9);
      \draw (2,28*1*2/9) -- (3,21*1*2/9);
      \draw (3,21*1*2/9) -- (4,15*1*2/9);
      \draw (4,15*2/9) -- (5,10*1*2/9);
      \draw (5,10*2/9) -- (6,6*1*2/9);
      \draw (6,6*1*2/9) -- (7,3*1*2/9);
      \draw (7,3*1*2/9) -- (8,1*2/9);
      \draw (8,2/9) -- (9,0);
	\end{tikzpicture}
  \caption{The 9-sided principal $x$-polygon for Example \ref{Ex. FourGens}}
  \label{Fig. fdeg9}
\end{figure}


\section{Examples of the Main Theorem}\label{Sec: Examples} 

We will illustrate Theorem \ref{Thm: Main} via a series of examples, varying $n$ and $a$ to illuminate different cases. Of course, Case \ref{MainI} is simply Dedekind--Kummer factorization. For example, take $n=a=3$, though $a$ could be any odd integer. We see \[x^3-3\equiv x^3-1\equiv (x-1)(x^2+x+1) \bmod 2.\text{ Thus } 2\Ocal_{\QQ(\sqrt[3]{3})}=\big(2,\sqrt[3]{3}-1\big)\big(2, \sqrt[3]{9}+\sqrt[3]{3}+1\big).\] 

\begin{example}\label{Ex. n=48.vary a}
    Take $n=48=2^4\cdot 3$ and $a=6$. Note $\gcd\big(v_2(a),n\big)=1$, so we are in Case \ref{MainII}.
    We have 
    \begin{align*}
        g(x)=x-1, \text{ so } 2\Ocal_{\QQ(\hspace{-.5ex}\sqrt[48]{6})}=\gp^{48}.
    \end{align*}
    Here we see that $x^{48}-6$ is actually $2$-Eisenstein, so Dedekind--Kummer factorization still yields the result. Moreover, $2\Ocal_{\QQ(\hspace{-.5ex}\sqrt[48]{6})}=\big(2,\sqrt[48]{6}\big)^{48}$. 

    When $\gcd(v_2(a),n)=1$ in Case \ref{MainII}, we can produce explicit generators of the prime above 2. For example, if $a=96=2^5\cdot 3$, then we note that $29\cdot 5-3\cdot 48=1$. Hence, $\beta=\big( \sqrt[48]{96}\big)^{29} /2^3$ is an integral element of the correct 2-adic valuation. We see $2\Ocal_{\QQ(\hspace{-.5ex}\sqrt[48]{96})}=(2,\beta)^{48}$.

    Even when $v_2(a)$ and $n$ are not relatively prime, we can construct generators. However, the process becomes somewhat ad hoc. Take $a=24=2^3\cdot 3$. Case \ref{MainII} yields $2\Ocal_{\QQ(\hspace{-.5ex}\sqrt[48]{24})}=\gp_1^{16}\gp_2^{16}$ with the residue class degree of $\gp_2=2$. However, we see $\big(\hspace{-.5ex}\sqrt[48]{24}\big)^{16}=2\cdot \sqrt[3]{3}$, so our example above with $\sqrt[3]{3}$ is contained in this extension. Since $\sqrt[48]{24}$ has the requisite valuation ($\frac{1}{16}$ that of 2), we can argue that 
    \[\gp_1=(2, \sqrt[3]{3}-1, \sqrt[48]{24}\big) \ \text{ and } \gp_2=\big(2, \sqrt[3]{9}+\sqrt[3]{3}+1, \sqrt[48]{24}\big).\] 
    This construction of generators is too unwieldy to employ in a generic manner.
    
    We continue with $n=48$ but take $a=3$. Since $a$ is odd, this is Case \ref{MainIII}. As $a-1=2$, we have $w=1$ and we are in Case \ref{MainIIIi}. We see $g(x)=x^3-1=(x-1)(x^2+x+1)$ in $\FF_2[x]$, so $2\Ocal_{\QQ(\hspace{-.5ex} \sqrt[48]{3})}=\gp_1^{16}\gp_2^{16}$, where the residue class degrees are 1 and 2.

    Taking $a=5$ now, notice $w=2$, so we are in Case \ref{MainIIIii}. Since $g(x)=x^3-1=(x-1)(x-\zeta_3)(x-\zeta_3^2)$ in $\FF_4[x]$, we have $2\Ocal_{\QQ(\hspace{-.5ex} \sqrt[48]{5})}=\gp_1^8\gp_2^8\gp_3^8,$ where each prime has residue class degree 2.

    Taking $a=41$, $w=3$, so we are again in Case \ref{MainIIIii}. Since $g(x)=x^3-1=(x-1)(x-\zeta_3)(x-\zeta_3^2)$ in $\FF_4[x]$ and $g(x)=(x-1)(x^2+x+1)$ in $\FF_2[x]$, we have $2\Ocal_{\QQ(\hspace{-.5ex} \sqrt[48]{41})}=\gp_1^4\gp_2^4\gp_3^4 \gp_{2,1}^8\gp_{2,2}^8,$ where $\gp_{2,1}$ has residue class degree 1, but every other prime ideal factor has residue class degree 2.
    
    If $a=65$, then $w=6$. Since $m+1=5$, we are in Case \ref{MainIIIiii}. We employ the aforementioned factorization of $g(x)=x^3-1$ over $\FF_2[x]$, to obtain
\[2\Ocal_{\QQ(\hspace{-.5ex} \sqrt[48]{65})}=\gp_{1,1}\gp_{1,2}\gp_{2,1}\gp_{2,2}\gp_{3,1}^2\gp_{3,2}^2\gp_{4,1}^4\gp_{4,2}^4\gp_{5,1}^8\gp_{5,2}^8,\]
where the second index also gives the residue class degree.

    Now take $a=12$. Here $\gcd\big(48, v_2(12)\big)=2$, so we are in Case \ref{MainIV}. As $a=a_02^{h2^k}$, we find $a_0=3$, $h=1$, and $k=1$. Since $h=1$, we have $g(x)=x-1$. As $n=2^mn_0=2^43$, we see $k<m$, so we avoid Case \ref{MainIV0}. Further $w_0=1$, so we are in Case \ref{MainIVi}. We have $v_2(a_0+1)=2$, so the splitting is provided by \ref{MainIVi1}. We find $2\Ocal_{\QQ(\hspace{-.5ex} \sqrt[48]{12})}=\gp^{48}$.
    
    Letting $a=448=2^6\cdot 7$, we have $h=3$ and $k=1$, so $d=\gcd(h,n_0)=3$ and $g(x)=x^3-1$. We still have $w_0=v_2(7-1)=1$, but since $v_2(a_0+1)=3$ we consult Case \ref{MainIVi2}. As $x^3-1$ splits completely over $\FF_4[x]$, we see $2\Ocal_{\QQ(\hspace{-.5ex} \sqrt[48]{448})}=\gp_1^{8}\gp_2^8\gp_3^8$, where the prime ideals have residue class degree 2.
    
    Taking $a=1984=2^6\cdot 31$, we have much the same situation as above but $v_2(31+1)=5$, so we are in Case \ref{MainIVi3}. Since $x^3-1=(x-1)(x^2+x+1)$ over $\FF_2[x]$, we have $2\Ocal_{\QQ(\hspace{-.5ex} \sqrt[48]{1984})}=\gp_{1,1}^{8}\gp_{1,2}^{8}\gp_{2,1}^{8}\gp_{2,2}^{8}$, where the second index can be taken to be the degree.

    Continuing, we try $a=320=5\cdot 2^6$. We have $w_0=v_2(a_0-1)=2$, so Case \ref{MainIVii} and the complete splitting of $x^3-1$ over $\FF_4[x]$ gives us $2\Ocal_{\QQ(\hspace{-.5ex} \sqrt[48]{320})}=\gp_1^{8}\gp_2^8\gp_3^8$, with each $\gp_i$ having residue field $\FF_4$.
\end{example}


\begin{example}\label{Ex. n=80}
    For some variation, we take $n=80$ in order to explore Cases \ref{MainIV0} and \ref{MainIViii}. We see $m=4$ and $n_0=5$. Take $a=2^{5\cdot 32}\cdot3$. We see that $5=k\geq m$ and $w_0=1$, so we are in Case \ref{MainIV0i}. As $d=5$, we compute $g(x)=x^5-1=(x-1)(x^4+x^3+x^2+x+1)$ in $\FF_2[x]$. Thus $2\Ocal_{\QQ(\hspace{-.5ex} \sqrt[80]{a})}=\gp_1^{16}\gp_2^{16},$ where $\gp_1$ has degree 1 but $\gp_2$ has degree 4.

    Notice here that $\QQ\big(\hspace{-.5ex}\sqrt[16]{a}\big)=\QQ\big(\hspace{-.5ex}\sqrt[16]{3}\big)$, but $x^5-a$ yields a different extension of this intermediate field than $x^5-3$. This is to illustrate that Case \ref{MainIV0} can be simplified to Case \ref{MainIII} at the 2-power level, but afterwards the behavior is distinct.

    Now take $a=42,991,616=2^{4\cdot 5}\cdot 41$. We see that $2\mid a$ and $2\mid \gcd(v_2(a),n)$, so we are in Case \ref{MainIV}. As $w_0=v_2(40)=3$ and $k=2$, we are in Case \ref{MainIViii1}. Since $d=5$, we compute the factorization of $g(x)=x^5-1$ over $\FF_2[x]$ and $\FF_4[x]$. The factorization over $\FF_2[x]$ is described above, and over $\FF_4[x]$ we have $x^5-1=(x-1)(x^2 + \zeta_3 x + 1) (x^2 + (\zeta_3 + 1)x + 1)$. Thus $2\Ocal_{\QQ(\hspace{-.5ex} \sqrt[80]{a})}=\gp_{0,1}^{4}\gp_{0,2}^{4}\gp_{0,3}^{4} \gp_{1,1}^{8}\gp_{1,2}^{8}$, where $\gp_{0,1}$ has degree 2, $\gp_{0,2}, \gp_{0,3}, \gp_{1,2}$ have degree 4, and $\gp_{1,1}$ has degree 1. 

    Continuing, take $a=17,825,792=2^{4\cdot 5}\cdot 17$. We are in Case \ref{MainIV} with $w_0=4$ and $k=2$. Since $w_0-2 = k >1$, we are in Case \ref{MainIViiinew3}. The factorizations of $g(x)$ above yield $2\Ocal_{\QQ(\hspace{-.5ex} \sqrt[80]{a})}=\gp_{2,1}^{4}\gp_{2,2}^{4}\gp_{2,3}^{4}\gp_{0,1}^{4}\gp_{0,2}^{4}\gp_{1,1}^{4}\gp_{1,2}^{4},$ where $\gp_{2,1}$ has residue class degree 2, each $\gp_{*,2}$ and $\gp_{2,3}$ has residue class degree 4, and $\gp_{1,1}$ and $\gp_{0,1}$ have residue class degree 1. 
   
    Now take $a=34,603,008=2^{4\cdot 5}\cdot 33$. We are in Case \ref{MainIV} with $w_0=5$ and $k=2$. Since $w_0-2 > k >1$, we are in Case \ref{MainIViiinew4}. As above, we have $g(x)=x^5-1=(x-1)(x^4+x^3+x^2+x+1)$ in $\FF_2[x]$. Thus $2\Ocal_{\QQ(\hspace{-.5ex} \sqrt[80]{a})}=\gp_{0,1}^{4}\gp_{0,2}^{4}\gp_{1,1}^{4}\gp_{1,2}^{4}\gp_{2,1}^{4}\gp_{2,2}^{4}\big(\gp_{2,1}'\big)^{4}\big(\gp_{2,2}'\big)^{4},$ where 
    each $\gp_{*,1}$ and $\gp'_{1,1}$ have degree 1 and each $\gp_{*,2}$ and $\gp'_{2,2}$ have degree 4. 
    
\end{example}


\begin{example}\label{Ex. CaseIVii3} 
    Finally, we will look at an example of Case \ref{MainIViiinew2} with $m=2$. Take $n=2^2\cdot 21$. We need $k=1$, otherwise we end up in Case \ref{MainIV0}, so let $a=65\cdot 2^{2\cdot 21}$. We see $w_0=v_2(65-1)=6$ and $t=w_0-2=4$. Further, $d=\gcd(h,n_0)=21$, so $g(x)=x^{21}-1=(x - 1)(x^2 + x + 1)(x^3 + x + 1)(x^3 + x^2 + 1)(x^6 + x^4 + x^2 + x + 1)(x^6 + x^5 + x^4 + x^2 + 1)$ in $\FF_2[x]$. Case \ref{MainIViiinew2} shows $2\Ocal_{\QQ(\hspace{-.5ex} \sqrt[84]{a})}=\gp_{1,1}^{2}\gp_{1,2}^{2}\gp_{1,3}^{2}\gp_{1,4}^{2}\gp_{1,5}^{2}\gp_{1,6}^{2}\gp_{2,1}^{2}\gp_{2,2}^{2}\gp_{2,3}^{2}\gp_{2,4}^{2}\gp_{2,5}^{2}\gp_{2,6}^{2},$ where each $\gp_{*,1}$ has degree 1, each $\gp_{*,2}$ has degree 2, each $\gp_{*,3}$ and $\gp_{*,4}$ has degree 3, and each $\gp_{*,5}$ and $\gp_{*,6}$ has degree 6. 
\end{example}


\section{Non-Monogenic Fields with No Common Index Divisors}\label{Sec: NonMonoNoCIDs}


Following Pleasants, call a number field $K$ \textit{exceptional}\footnote{Recent work of Alp\"oge, Bhargava, and Shnidman \cite{CubicNonMono} shows that these number fields might not be so rare.} if it has no common index divisors yet it is still non-monogenic. 
When $n+1$ is prime, Pleasants \cite{Pleasants} uses congruence conditions on discriminants to construct radical (pure) number fields of degree $n$ that have no common index divisors yet are non-monogenic. 
We will employ the index form to construct other exceptional radical extensions. This construction yields exceptional examples of any degree $n>2$. 

\begin{proposition}\label{Prop: Exceptional}
    Fix $n>2$. Let $p$ be a prime and $k>1$ a squarefree integer such that 
    \begin{enumerate}[label=(\arabic*), ref=(\arabic*)]
        \item $p\dnd nk$;\label{(1)}
        \item $\big(pk^{n-1}\big)^q\not\equiv  p k^{n-1} \bmod q^2$ or $\big(p^{n-1}k\big)^q\not\equiv  p^{n-1}k \bmod q^2$ for any prime $q\mid n$;\label{(2)}
        \item $k^{(n-1)(n-2)/2}X_1^{n(n-1)/2}\equiv \pm 1\bmod p$ has no solutions.\label{(3)}
    \end{enumerate}
    Then, $\QQ\big(\sqrt[n]{pk^{n-1}}\big)$ is an exceptional number field. Moreover, there are infinitely many such fields for each $n$.
\end{proposition}

\begin{proof}
    First, we will justify the fact that there are infinitely many choices that satisfy the conditions for each $n>2$. What follows is merely one possible construction, and it seems likely that a more careful argument could establish a positive density. Choose $p$ to be a prime such that $p\equiv 1\bmod n^2$. Choose $k$ to be a prime such that $k\equiv 1+q\bmod q^2$ for each prime $q\mid n$ and $k\equiv \xi\bmod p$, where $\xi$ generates $\ZZ/p\ZZ^\times$. The Chinese (Sunzi's) remainder theorem and Dirichlet's prime number theorem ensure that there are infinitely many such choices. We see \ref{(1)} and \ref{(2)} are satisfied.

    We focus on $k^{(n-1)(n-2)/2}X_1^{n(n-1)/2}\equiv \pm 1\bmod p$. A solution implies, $X_1^{n(n-1)}\equiv k^{-(n-1)(n-2)}\bmod p$. Since $n\mid \gcd(p-1,n)$, we see $X_1^{n(n-1)}\bmod p$ is an $n^{\text{th}}$ power. As $k\equiv \xi\bmod p$ and $n>2$, we find $k^{(n-1)(n-2)}\bmod p$ is not an $n^\text{th}$ power. This contradiction implies \ref{(3)} is satisfied. 

    Now, we prove that $\QQ\big(\sqrt[n]{pk^{n-1}}\big)$ is not monogenic but has no common index divisors. Condition \ref{(2)} ensures that $x^n - pk^{n-1}$ or $x^n - p^{n-1}k$ is maximal at each prime dividing $n$. (See \cite{g17}, \cite{JhorarKhanduja}, or \cite{SmithRadical}.) The discriminant of $x^n - pk^{n-1}$ is divisible by only $p$ and the primes dividing $nk$. Thus, employing Condition \ref{(1)} to note $x^n - pk^{n-1}$ is $p$-Eisenstein while $x^n - p^{n-1}k$ is Eisenstein at all primes dividing $k$, we see that 
\[\Bcal = \left\{1, \sqrt[n]{p k^{n-1}}=p^{\frac1n}k^{\frac{n-1}{n}}, p^{\frac2n}k^{\frac{n-2}{n}}, \dots, p^{\frac{n-1}{n}} k^{\frac{1}{n}}  \right\}\]
is an integral basis for $\QQ\big(p^\frac{1}{n}k^\frac{n-1}{n}\big)$. 

If $\QQ\big(p^\frac{1}{n}k^\frac{n-1}{n}\big)$ is monogenic with monogenerator $\alpha=X_1 p^{\frac1n}k^{\frac{n-1}{n}} + X_2 p^{\frac2n}k^{\frac{n-2}{n}}+\cdots + X_{n-1}p^{\frac{n-1}{n}}k^{\frac{1}{n}}$ (we lose no generality assuming $X_0=0$), then the following change of basis matrix has determinant $\pm 1$:

\begin{equation}\label{Eq: IndMatrix}
\text{Index Form}\left(\QQ\left(p^\frac{1}{n}k^\frac{n-1}{n}\right)\right) \ \  = \ \ \det\begin{bmatrix}  
    1     		 & 0            & 0       & \dots      & 0       & 0 \\
    0      	  	 & X_1          & X_2     & \dots      & X_{n-2} & X_{n-1} \\
    A_{3,1}	 	 & A_{3,2}	    & A_{3,3} & \dots     & A_{3,n-1}  & A_{3,n} \\
    A_{4,1}      & A_{4,2}      & A_{4,3} & \dots    & A_{4,n-1}  & A_{4,n} \\  
    \vdots       & \vdots       & \vdots  & \ddots    & \vdots  & \vdots \\
    A_{n-1,1}      & A_{n-1,2}      & A_{n-1,3} & \dots    & A_{n-1,n-1}  & A_{n-1,n} \\  
    A_{n,1}      & A_{n,2}      & A_{n,3} & \dots    & A_{n,n-1}  & A_{n,n} \\  
\end{bmatrix}.
\end{equation}

Here the entry $A_{i,j}$ is the coefficient of the $j^{\text{th}}$ element of $\Bcal$ when we compute 
\begin{equation}\label{Eq. Aij}
\left(X_1 p^{\frac1n}k^{\frac{n-1}{n}} + X_2 p^{\frac2n}k^{\frac{n-2}{n}}+\cdots + X_{n-1}p^{\frac{n-1}{n}}k^{\frac{1}{n}}\right)^{i-1}.
\end{equation}

We will compute the determinant in \eqref{Eq: IndMatrix} modulo $p$. Expanding across the first row, we see it suffices to compute the determinant of 
\[M\coloneqq\begin{bmatrix}  
X_1           & X_2     & X_3     & \dots      & X_{n-2} & X_{n-1} \\
A_{3,2}	      & A_{3,3} & A_{3,4} & \dots     & A_{3,n-1}  & A_{3,n} \\
A_{4,2}       & A_{4,3} & A_{4,4} & \dots    & A_{4,n-1}  & A_{4,n} \\  
\vdots        & \vdots  & \vdots    & \ddots &\vdots  & \vdots \\
A_{n-1,2}       & A_{n-1,3} & A_{n-1,4} &\dots    & A_{n-1,n-1}  & A_{n-1,n} \\  
A_{n,2}       & A_{n,3} & A_{n,4} & \dots    & A_{n,n-1}  & A_{n,n} \\  
\end{bmatrix}\]

Noting that products 
with $p$-adic valuation 1 or greater result in an entry that is divisible by $p$, we see that $M$ is upper triangular modulo $p$. In other words, if $i>j$, the entry $A_{i,j}$ will be divisible by $p$. Indeed, in this case, \eqref{Eq. Aij} shows each summand making up $A_{i,j}$ is coming from a product of basis elements resulting in a $p$-adic valuation of 1 or greater. Thus, the determinant of $M$ modulo $p$ is $X_1A_{3,3}\cdots A_{n,n}$. Again, considering \eqref{Eq. Aij} for $A_{i,i}$ and eliminating summands that are divisible by $p$ shows that we have
\[\det M\equiv\det\begin{bmatrix}    
 X_1           & X_2     & X_3 & \dots      & X_{n-2} & X_{n-1} \\
0	      & kX_1^2 &  A_{3,4} & \dots     & A_{3,n-1}  & A_{3,n} \\
0       & 0 &  k^2X_1^3 & \cdots & A_{4,n-1} & A_{4,n} \\  
\vdots        & \vdots & \vdots & \ddots    & \vdots  & \vdots \\
0       & 0 & 0 &\dots    & k^{n-3}X_1^{n-2}  & A_{n-1,n} \\  
0       & 0 & 0 & \dots    & 0  & k^{n-2}X_1^{n-1} \\  
\end{bmatrix}
\equiv k^{(n-1)(n-2)/2}X_1^{n(n-1)/2}\bmod p.\]

Condition \ref{(3)} shows there are no solutions.

To see there are no common index divisors, we can employ Corollaries \ref{Cor: 2NCID} and \ref{Cor: oddNCIDs}, or we can simply note that we have found polynomials that are maximal at each prime dividing the discriminant of $\QQ\big(p^\frac{1}{n}k^\frac{n-1}{n}\big)$.
\end{proof}


\begin{example}\label{Ex. DiscCongn=5}
    For an explicit example of the construction above, take $x^5-11\cdot 13^4$. In the notation above $n=5$, $p=11$, and $k=13$. We check that $\big(11\cdot 13^4\big)^5\equiv 1\bmod 25$, but $11\cdot 13^4\equiv 21\bmod 25$. Further, $\gcd(11-1,5)=5>2$ and 13 reduces to 2 modulo 11, which is a generator (equivalently, the inverse of a generator) of $\ZZ/11\ZZ^\times$.

    The index form of $\QQ\big(\hspace{-.5ex}\sqrt[5]{11\cdot 13^4}\big)$ is composed of 58 monomials and takes more than a dozen lines to write down. However, modulo 11 it is simply $-2X_1^{10}$. Immediately, we see that $-2X_1^{10}=\pm 1$ has no solutions in $\ZZ/11\ZZ$, so that $\QQ\big(\hspace{-.5ex}\sqrt[5]{11\cdot 13^4}\big)$ is not monogenic. The fact that this number field has no common index divisors can be seen by noting that $x^5-11\cdot 13^4$ is maximal at all primes not equal to 13, while $x^5-11^4\cdot 13$ is maximal at 13.
\end{example}






\bibliography{Bibliography}
\bibliographystyle{alpha}


\end{document}